\newtheorem{theorem}{Theorem}
\newtheorem{corollary}{Corollary}
\newtheorem{lemma}{Lemma}
\newtheorem{proposition}{Proposition}
\newtheorem{definition}{Definition}
\title{Clean Graphs and Idempotent Graphs over Finite Rings: An Approach Based on $\mathbb{Z}_n$}
\author{Felicia Servina Djuang$^{1}$, Indah Emilia Wijayanti$^{2}$, and Yeni Susanti$^{3}$\\
%\protect\vspace{0.5cm}
{\small $^{1,2,3}$Department of Mathematics, Universitas Gadjah Mada, Yogyakarta, Indonesia}\\
\small{$^{1}$feliciadjuang25@mail.ugm.ac.id, $^{2}$ind\_wijayanti@ugm.ac.id, $^{3}$yeni\_math@ugm.ac.id}}
\date{}
\begin{document}
\maketitle
\begin{abstract}
Let $R$ be a finite ring with identity. The idempotent graph $I(R)$ is the graph whose vertex set consists of the non-trivial idempotent elements of $R$, where two distinct vertices $x$ and $y$ are adjacent if and only if $xy = yx = 0$. The clean graph $Cl(R)$ is a graph whose vertices are of the form $(e, u)$, where $e$ is an idempotent element and $u$ is a unit of $R$. Two distinct vertices $(e,u)$ and $(f, v)$ are adjacent if and only if $ef = fe = 0$ or $uv = vu = 1$. The graph $Cl_2(R)$ is the subgraph of $Cl(R)$ induced by the set $\{(e, u) : e \text{ is a nonzero idempotent element of } R\}$. In this study, we examine the structure of clean graphs over $\mathbb{Z}_{n}$ derived from their $Cl_2$ graphs and investigate their relationship with the structure of their idempotent graphs. \\
\textbf{Keyword:} clean graph, idempotent graph, isomorphism graph, unit, idempotent.
\end{abstract}

\section{Introduction}

    The study of zero-divisor graph and algebraic graphs over ring was first introduced by Beck in \cite{beck}, who considered all elements of a commutative ring $R$ as vertices and explored the structure of its zero-divisor graph, primarily focusing on graph colorings. In \cite{anderson}, Anderson and Livingston formally defined the zero-divisor graph of commutative ring $R$, denoted by $\Gamma(R)$. In their definition, the vertices of $\Gamma(R)$ consist of all nonzero zero-divisors of $R$, with two distinct vertices $x$ and $y$ connected by an edge if and only if $xy=0$. 
    
    In addition to the zero-divisor graph, there also exists the unit graph over the ring $\mathbb{Z}_n$, introduced by Grimaldi in \cite{grimaldi}, as well as the idempotent graph introduced by Akbari et al. in \cite{akbari}. The vertex set of the unit graph over $\mathbb{Z}_n$ is $\mathbb{Z}_n$, and two vertices $x$ and $y$ are adjacent if $x + y$ is a unit in $\mathbb{Z}_n$. The vertex set of the idempotent graph over a ring $R$ consists of the nontrivial idempotent elements of $R$, and two vertices $x$ and $y$ are adjacent if $xy = yx = 0$. Further research on idempotent graphs over matrix rings was conducted by \cite{patil}, who determined the structure of the idempotent graph over the ring $M_{2}(\mathbb{F})$, where $\mathbb{F}$ is a field. Then, the study of graphs associated with rings has become an active area of research (see, for example, \cite{grimaldi},\cite{pongthana},\cite{singhpatekar}). The concepts of idempotent graphs and clean rings motivated Habibi et al. in \cite{habibiyet} to define a clean graph over a ring $R$, denoted by $Cl(R)$, where the vertices consist of all pairs of idempotent elements and unit elements of $R$. Two vertices $(e,u)$ and $(f,v)$ are adjacent if and only if $ef=fe=0$ or $uv=vu=1$. This naturally raises the question of how the clean graph of a ring is related to its idempotent graph, which served as the primary motivation for defining the clean graph.

    Research on algebraic graphs has potential applications in coding theory. \cite{fish} constructed linear codes derived from the incidence matrix of the line graph of the Hamming graph. This motivated \cite{jain} to construct linear codes based on the incidence matrix of the unit graph over $\mathbb{Z}_n$, by categorizing cases based on the number of prime factors of $n$.  

    In this context, the theory of clean graphs can be utilized for further studies in coding theory. However, additional research is needed, particularly focusing on the construction of linear codes. This requires a thorough investigation of the structure of clean graphs over finite rings, especially $\mathbb{Z}_n$. %Furthermore, the limited research on clean graphs over non-commutative rings also serves as a motivation for the research topic of this paper. (Ini akan ditampilkan jika pada paper ini juga akan dituliskan clean graph atas matriks)
    
    Before proceeding, we recall some basic terminology that will be used in this paper. Let $R$ be a ring with identity. An element $e \in R$ is called \textit{idempotent} if $e^2=e$, and an element $u \in R$ is called a unit if there exists $v \in R$ such that $uv=vu=1$. The sets of all idempotent elements and all unit elements of $R$ are denoted by $Id(R)$ and $U(R)$, respectively. Additionally, the set $U(R)$ can be partitioned as follows:
    \begin{align*}
        U'(R)=\{u \in U(R): u^2=1\} \text{ and } U''(R) =U(R) \setminus U'(R).
    \end{align*}
    If an element $a \in R$ can be expressed as $a = e + u$, where $e$ is an idempotent and $u$ is a unit in $R$, then $a$ is called clean. A ring $R$ is said to be clean if every element of $R$ is clean. In \cite{immormino}, Immormino proved that every finite ring is clean. For any undefined notation or terminology in ring theory and further studies related to clean rings, we refer the reader to \cite{malikmor} and \cite{nicholzhou}.

    Let $G=(V(G),E(G))$ be a graph, where $V(G)$ and $E(G)$ represent the set of vertices and edges, respectively. A graph $G$ is said to be \textit{connected} if there exists a path between every pair of distinct vertices, and \textit{complete} if every pair of distinct vertices is adjacent. The complete graph with $n$ vertices is denoted by $K_n$. The degree of a vertex $v$ in graph $G$, denoted by $\deg_G(v)$, refers to the number of edges in graph $G$ that incident to $v$. Two graphs $G_1$ and $G_2$ are said to be isomorphic, denoted $G_1 \cong G_2$, if there exists a bijection $f: V(G_1) \to V(G_2)$ between their vertex sets such that for every pair of vertices $u,v \in V(G_1)$, $u$ and $v$ are adjacent in $G_1$ if and only if $f(u)$ and $f(v)$ are adjacent in $G_2$. For additional background on graph theory and relevant terminology, we refer the reader to \cite{wilson}.
    % In addition, the diameter, clique number, chromatic number, first Zagreb index, second Zagreb index, and Randić index of $G$ are denoted by $diam(G), \omega(G), \chi(G), M_1(G), M_2(G),$ and $R(G)$, respectively. The theoretical foundations of the Zagreb index and the Randić index are provided in \cite{gutmantri} and \cite{randic}. 

    In \cite{habibiyet}, Habibi et al. defined the subgraphs $Cl_1(R)$ and $Cl_2(R)$ as the induced subgraphs of $Cl(R)$, where the vertex sets are given by
    \begin{align*}
        Cl_1(R)=\{(0,u): u \in U(R)\} \text{ and } Cl_2(R)=\{(e,u): e \in Id(R)\setminus\{0\}, u \in U(R)\}.
    \end{align*}
    Moreover, Remark 2.6 in \cite{habibiyet} provides a formula for the degree for any vertex $x=(e,u)$ in the graph $Cl_2(R)$ as follows: 
    \begin{align*}
        deg_{Cl_2(R)}(x)=\begin{cases}
            |Id(R)| + O_e|U(R)| -2, &\text{if } u \in U'(R),\\
            |Id(R)| + O_e|U(R)| -1, &\text{if } u \in U''(R),
        \end{cases}
    \end{align*}
    where $O_e:= |\{f \in Id(R)\setminus\{0\}: ef=fe=0\}|$. However, a counterexample can be found in the clean graph $Cl_2(\mathbb{Z}_{10})$. Specifically, the vertex $(6,3)$ has a degree of $6$, whereas applying the given formula in the remark results in a degree of $7$. To address this discrepancy, this paper provides a correction to the degree formula for vertices in $Cl_2(R)$. Additionally, we investigate the structure of the graphs $Cl_2(\mathbb{Z}_n)$, focusing on $Cl_2(\mathbb{Z}_{p^n})$ and $Cl_2(\mathbb{Z}_{p^nq^m})$. Furthermore, we explore the structures of $Cl_2(\mathbb{Z}_{p_1^{n_1}p_2^{n_2}p_3^{n_3}})$ and $Cl_2(\mathbb{Z}_{p_1^{n_1}p_2^{n_2}p_3^{n_3}p_4^{n_4}})$, from which we derive a generalization for the structure of $Cl_2(\mathbb{Z}_n)$ and find a relation about the structure of clean graph $Cl_2(R)$ with their idempotent graph $I(R)$, for arbitrary ring $R$.
    
\section{Results}
The following theorem provides a correction to the vertex degree formula in the graph $Cl_2(R)$.
\begin{theorem}\label{theorem_degree}
    Let $R$ be a ring with identity. For every $x=(e,u) \in V(Cl_2(R))$ we have
    $$deg_{Cl_2(R)}(x)=\begin{cases}
        \left|Id(R)\right| + O_e \left( \left| U(R)\right|-1 \right) -2, &\text{if } u \in U'(R),\\ 
        \left|Id(R)\right| + O_e \left( \left| U(R)\right|-1 \right) -1, &\text{if } u \in U''(R).\\ 
    \end{cases}$$
\end{theorem}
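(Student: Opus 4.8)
The plan is to count the neighbors of a fixed vertex $x=(e,u)$ by inclusion--exclusion over the two mechanisms by which a vertex can be adjacent to $x$. First I would introduce the two candidate neighbor sets inside $V(Cl_2(R))$: let $A=\{(f,v)\in V(Cl_2(R)): ef=fe=0\}$ record adjacency via the idempotent condition, and $B=\{(f,v)\in V(Cl_2(R)): uv=vu=1\}$ record adjacency via the unit condition. Since adjacency in the induced subgraph $Cl_2(R)$ asks only that the two endpoints be distinct and satisfy one of these conditions, the neighbor set of $x$ is exactly $(A\cup B)\setminus\{x\}$, so $\deg_{Cl_2(R)}(x)=|A\cup B|-[\,x\in A\cup B\,]$.

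Next I would evaluate the three relevant cardinalities. A vertex of $Cl_2(R)$ has a nonzero idempotent first coordinate, so the first coordinates $f$ occurring in $A$ are precisely the $O_e$ nonzero idempotents orthogonal to $e$, while the second coordinate $v$ may be any unit; hence $|A|=O_e\,|U(R)|$. For $B$, the condition $uv=vu=1$ pins down $v=u^{-1}$ uniquely, while $f$ ranges over all nonzero idempotents, giving $|B|=|Id(R)|-1$. The overlap $A\cap B$ consists exactly of the pairs $(f,u^{-1})$ with $f$ a nonzero idempotent orthogonal to $e$, so $|A\cap B|=O_e$. Inclusion--exclusion then yields $|A\cup B|=|Id(R)|-1+O_e\bigl(|U(R)|-1\bigr)$.

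Finally I would decide whether $x\in A\cup B$. Because $e\neq 0$ we have $e\cdot e=e\neq 0$, so $x\notin A$; and $x\in B$ holds precisely when $u=u^{-1}$, i.e. when $u\in U'(R)$. Therefore one subtracts $1$ from $|A\cup B|$ in the case $u\in U'(R)$ and subtracts $0$ in the case $u\in U''(R)$, which produces the two cases in the statement.

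There is no deep obstacle in this argument; the one point that must be handled correctly — and exactly the point at which the formula of Remark 2.6 in \cite{habibiyet} breaks down — is that $A\cap B$ is in general nonempty, of size $O_e$. The $O_e$ vertices $(f,u^{-1})$ with $ef=fe=0$ are adjacent to $x$ \emph{both} through the idempotent condition and through the unit condition, so they must be counted once, not twice. Replacing the erroneous term $O_e\,|U(R)|$ by $O_e\bigl(|U(R)|-1\bigr)$ removes exactly this double count, and in particular repairs the discrepancy observed at the vertex $(6,3)$ of $Cl_2(\mathbb{Z}_{10})$.
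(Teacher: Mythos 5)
Your proof is correct and rests on the same counting as the paper's: neighbors arising from the unit condition, neighbors arising from the idempotent condition, an overlap of exactly $O_e$ vertices of the form $(f,u^{-1})$, and a final adjustment depending on whether $u=u^{-1}$. The only difference is presentational — you package the argument as a single inclusion–exclusion plus the test $x\in A\cup B$, whereas the paper splits into the cases $u\in U'(R)$ and $u\in U''(R)$ first and removes the already-counted vertices $(f,u^{-1})$ by hand within each case.
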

\begin{proof}
    For every $x=(e,u) \in V(Cl_2(R))$ we have two cases as follows.
    \begin{itemize}
        \item [(i)] If $u \in U'(R)$, then $u^2=1$. For every vertex $(f,v) \in V(Cl_2(R))$,  $$(f,v)(e,u) \in E(Cl_2(R)) \iff fe=ef=0 \text{ or } uv=1.$$ 
        \begin{itemize}
            \item[(i.a)] Suppose $uv=1$, it means $v=u$, so $(f,u)(e,u) \in E(Cl_2(R)$ for every $f \in Id(R)\setminus \{0,e\}$. There are $\left| Id(R)\right|-2$ possibilities. 
            \item[(i.b)] Suppose $fe=ef=0$, then $(f,v)(e,u) \in E(Cl_2(R))$ for every $v \in U(R)$. But, vertex $(f,u)$ has been included in the previous case, so there are $O_e\left(|U(R)|-1\right)$ remaining possibilities.
        \end{itemize}
       \item [(ii)] If $u \in U''(R)$, then $u^2 \neq 1$. For every vertex $(f,v) \in V(Cl_2(R))$,  $$(f,v)(e,u) \in E(Cl_2(R)) \iff fe=ef=0 \text{ or } uv=1.$$ 
        \begin{itemize}
            \item[(ii.a)] Suppose $uv=1$, it means $v \neq u$, so $(f,v)(e,u) \in E(Cl_2(R)$ for every $f \in Id(R) \setminus \{0\}$. There are $\left| Id(R)\right|-1$ possibilities. 
            \item[(ii.b)] Suppose $fe=ef=0$, then $(f,v)(e,u) \in E(Cl_2(R))$ for every $v \in U(R)$. But, vertex $(f,u^{-1})$ has been included in the previous case, so there are $O_e\left(|U(R)|-1\right)$ remaining possibilities.
        \end{itemize} \end{itemize}
\end{proof}

\subsection{Structure of Clean Graphs over $\mathbb{Z}_{p^n}$ and $\mathbb{Z}_{p^nq^m}$}

The following key lemma is instrumental in determining the structure and cha\-racteristics of the clean graph over $\mathbb{Z}_{p^n}$ and $\mathbb{Z}_{p^nq^m}$. 
\begin{lemma}\label{lemma_U'U''(R)}
    Given ring $\mathbb{Z}_{p^n}$ with a prime number $p$ and a natural number $n$. Let $a \in \mathbb{Z}_{p^n}$.
    \begin{itemize}
        \item[(i)] If $p \neq 2$, then
        $$a^2 \equiv 1 \!\!\!\! \pmod{p^n} \iff a \in \{1, p^n-1\}.$$
        \item[(ii)] If $p=2$ and $n \geq 3$, then
        $$a^2 \equiv 1 \!\!\!\! \pmod{p^n} \iff a \in \{1, 2^{n-1}-1, 2^{n-1}+1, 2^n-1\}.$$
    \end{itemize}
\end{lemma}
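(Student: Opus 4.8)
The plan is to reduce both parts to the divisibility statement
$a^2 \equiv 1 \pmod{p^n}$ if and only if $p^n \mid (a-1)(a+1)$, and then to analyze how the prime power $p^n$ can be distributed across the two factors $a-1$ and $a+1$, bearing in mind that their difference is $2$. The backward implications in both parts are immediate by direct substitution, so the content is the forward direction.

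For part (i), since $p$ is odd, any common divisor of $a-1$ and $a+1$ divides their difference $2$ and is therefore coprime to $p$; hence $p$ divides at most one of $a-1$ and $a+1$. Consequently the whole factor $p^n$ must divide that same one, giving $a \equiv 1 \pmod{p^n}$ or $a \equiv -1 \pmod{p^n}$, i.e.\ $a \in \{1,\, p^n-1\}$.

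For part (ii), first observe that $a$ must be odd, so $a-1$ and $a+1$ are consecutive even integers; exactly one of them is divisible by $4$, while the other has $2$-adic valuation exactly $1$. Writing $v_2$ for the $2$-adic valuation, we get $v_2\bigl((a-1)(a+1)\bigr) = 1 + v_2(\text{the factor divisible by }4) \ge n$, which forces that factor to be divisible by $2^{n-1}$. Splitting into the cases $2^{n-1} \mid a-1$ and $2^{n-1} \mid a+1$ gives $a \equiv \pm 1 \pmod{2^{n-1}}$, and lifting each of these residues modulo $2^{n-1}$ to residues modulo $2^n$ produces the four candidates $1,\ 2^{n-1}+1,\ 2^{n-1}-1,\ 2^n-1$. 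One then checks these are pairwise distinct when $n \ge 3$ and that each squares to $1$ modulo $2^n$ (for instance $(2^{n-1}\pm 1)^2 = 2^{2n-2} \pm 2^n + 1 \equiv 1$, since $2n-2 \ge n$, and similarly $(2^n-1)^2 \equiv 1$).

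The only delicate point I expect is the valuation bookkeeping in part (ii): one must use the hypothesis $n \ge 3$ precisely to ensure $n-1 \ge 2$, so that "divisible by $2^{n-1}$" is consistent with (and in fact stronger than) "divisible by $4$", which keeps the two cases clean and makes the final list of four solutions genuinely distinct. By contrast, the odd-prime case is essentially a one-line coprimality argument, so no real obstacle arises there.
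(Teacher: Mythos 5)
Your proof is correct and follows essentially the same route as the paper: factor $a^2-1=(a-1)(a+1)$ and analyze how the prime power $p^n$ can be distributed over two factors whose difference is $2$. In fact your part (i) is slightly more careful than the paper's, since you justify via $\gcd(a-1,a+1)\mid 2$ why $p^n$ must divide a single factor (a step the paper asserts without comment), and your $2$-adic valuation bookkeeping in part (ii) is just a cleaner packaging of the paper's elimination of the middle exponents $c$ with $2^c\mid a+1$ and $2^{n-c}\mid a-1$.
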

\begin{proof}
    Let $a \in \mathbb{Z}_{p^n}$.
    \begin{itemize}
        \item[(i)] $(\Leftarrow)$ If $a = 1$, then $a^2 \equiv 1 \pmod{p^n}$. On the other hand, if $a = p^n-1$, then $a \equiv (-1) \pmod{p^n}$, so $a^2 \equiv 1 \pmod{p^n}$.\\
        $(\Rightarrow)$ We have
        \vspace{-0.3cm}
        \begin{align*}
            & a^2 \equiv 1 \!\!\!\! \pmod{p^n}\\
            & \iff a^2-1 \equiv 0 \!\!\!\! \pmod{p^n}\\
            & \iff (a-1)(a+1) \equiv 0 \!\!\!\! \pmod{p^n}.
        \end{align*}
        
        So, we get $p^n \mid (a-1) \text{ or } p^n \mid (a+1)$.
        It means $a-1  \equiv 0 \pmod{p^n}$ or $a+1 \equiv 0  \pmod{p^n}$. Hence $$a \equiv 1 \!\!\!\! \pmod{p^n} \text{ or } a \equiv (-1) \!\!\!\! \pmod{p^n} \equiv p^n-1 \!\!\!\! \pmod{p^n}.$$
        As a result $a \in \{1, p^n-1\}$.
        \item[(ii)]  $(\Leftarrow)$ If $a=1$, then $a^2 \equiv 1 \pmod{2^n}$. Suppose $a=2^{n-1} \pm 1$, we get
        \begin{align*}
            a^2 & \equiv (2^{n-1} \pm 1)^2 \!\!\!\!\pmod{p^n}\\
             & \equiv \left((2^n)(2^{n-2}) \pm 2^n +1 \right)\!\!\!\! \pmod{p^n}\\
             & \equiv 1 \!\!\!\! \pmod{2^n}.
        \end{align*}
        Suppose $a = 2^n-1$, then $a \equiv (-1) \pmod{2^n}$. Hence $a^2 \equiv 1 \pmod{2^n}$.\\
        $(\Rightarrow)$ Analogously to the previous point, we obtain  
        $$(a-1)(a+1) \equiv 0 \pmod{2^n}.$$
        Consequently, it follows that $ 2^c \mid a+1$ and $2^{n-c} \mid a-1$ for $0 \leq c \leq n$. Assuming $2 \leq c \leq n-2$, we can write $a+1 = 2^c t$ and $a-1 = 2^{n-c} k$, where \( t, k \in \mathbb{Z}^+ \). So, we have  
        $$2^c t - 2^{n-c} k = 2 \iff 2\left(2^{c-2} t - 2^{n-c-2} k \right) = 1.$$
        This is impossible since $t$ and $k$ are integers.
        % Akibatnya, didapat dua kasus berikut.
        % \begin{itemize}
        %     \item[(a)] Kasus $2^c \mid a+1$ dan $2^{n-c} \mid a-1$ dengan $0 \leq c \leq n$.
        %     Diandaikan $2 \leq c \leq n-2$, dapat ditulis $a+1=2^c t$ dan $a-1=2^{n-c} k$, dengan $t,k \in \mathbb{Z}^+$. Diperoleh 
        %         $2^c t - 2^{n-c} k =2 \iff 2\left(2^{c-2} t - 2^{n-c-2} k \right) = 1.$ Terjadi kontradiksi.
        %     \item[(b)] Kasus $2^{n-c} \mid a+1$ dan $2^c \mid a-1$ dengan $0 \leq c \leq n$.
        %     Diandaikan $2 \leq c \leq n-2$, dapat ditulis $a+1=2^{n-c} t$ dan $a-1=2^{c} k$, dengan $t,k \in \mathbb{Z}^+$. Diperoleh 
        %         $2^{n-c} t - 2^{c} k = 2 \iff 2\left(2^{n-c-2} t - 2^{c-2} k \right) = 1.$ Terjadi kontradiksi.
        % \end{itemize}
        Thus, the possible values of $c$ that satisfy the condition are $\{0,1,n-1,n\}$. Consequently, there are several possible cases: $2^0 \mid a+1$ and $2^n \mid a-1$, or $2^1 \mid a+1$ and $2^{n-1} \mid a-1$, or $2^{n-1} \mid a+1$ and $2^1 \mid a-1$, or $2^n \mid a+1$ and $2^0 \mid a-1$. Therefore, the possible values of $a$ are  
        $$\{1, 2^{n-1}-1, 2^{n-1}+1, 2^n -1\}.$$
    \end{itemize} \end{proof}

\begin{theorem}\label{teo_strgrafZpn}
    Given ring $\mathbb{Z}_{p^n}$ with a prime number $p$ and a natural number $n$. It holds that
    \begin{align*}
        Cl_2(\mathbb{Z}_{p^n})=\begin{cases}
            K_1, &\text{ if } p=2, n=1,\\
            2K_1, &\text{ if } p=2, n=2,\\
            4K_1 \cup \left({2^{n-1}-2^{n-2}}-2 \right) K_2, &\text{ if } p=2, n \geq 3,\\
            2K_1 \cup \left(\frac{p^n-p^{n-1}}{2}-1 \right) K_2, &\text{ if } p \neq 2, n \geq 1.
        \end{cases}
    \end{align*}
\end{theorem}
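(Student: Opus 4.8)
The plan is to analyze the graph $Cl_2(\mathbb{Z}_{p^n})$ by first cataloguing its vertex set, then using the corrected degree formula from Theorem~\ref{theorem_degree} together with Lemma~\ref{lemma_U'U''(R)} to pin down the adjacency structure. The key structural observation is that $\mathbb{Z}_{p^n}$ is a local ring, so its only idempotents are $0$ and $1$; hence $Id(\mathbb{Z}_{p^n})\setminus\{0\} = \{1\}$, and every vertex of $Cl_2(\mathbb{Z}_{p^n})$ has the form $(1,u)$ with $u \in U(\mathbb{Z}_{p^n})$. Consequently $O_e = O_1 = 0$ for the unique idempotent $e = 1$ (there is no nonzero idempotent $f$ with $1\cdot f = 0$), so two vertices $(1,u)$ and $(1,v)$ are adjacent if and only if $uv = 1$ in $\mathbb{Z}_{p^n}$. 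This reduces the whole problem to understanding the "multiplicative inverse pairing" on the unit group $U(\mathbb{Z}_{p^n})$, which has order $\varphi(p^n) = p^n - p^{n-1}$.

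Next I would carry out the case analysis on $p$ and $n$, matching the four branches of the claimed formula. The base cases are immediate: for $p=2, n=1$ we have $U(\mathbb{Z}_2) = \{1\}$, giving the single vertex $(1,1)$ with a loop-free isolated vertex, i.e. $K_1$; for $p=2, n=2$ we have $U(\mathbb{Z}_4) = \{1,3\}$ with $3^2 = 9 \equiv 1$, so both units are self-inverse and we get two isolated vertices, $2K_1$. For the general cases, the pairing $u \mapsto u^{-1}$ partitions $U(\mathbb{Z}_{p^n})$ into fixed points (the self-inverse units, i.e.\ elements of $U'(\mathbb{Z}_{p^n})$) and $2$-element orbits $\{u, u^{-1}\}$ with $u \neq u^{-1}$ (pairs from $U''(\mathbb{Z}_{p^n})$). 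A fixed point $u$ yields a vertex $(1,u)$ adjacent to nothing (since its only potential neighbor via the unit condition would be itself), hence an isolated vertex $K_1$; a genuine $2$-orbit $\{u,u^{-1}\}$ yields an edge between $(1,u)$ and $(1,u^{-1})$, and these vertices are adjacent to no one else, hence a copy of $K_2$. So $Cl_2(\mathbb{Z}_{p^n}) = |U'(\mathbb{Z}_{p^n})|\,K_1 \cup \tfrac{|U''(\mathbb{Z}_{p^n})|}{2}\,K_2$. Now Lemma~\ref{lemma_U'U''(R)} gives $|U'(\mathbb{Z}_{p^n})| = 2$ when $p$ is odd (namely $\{1, p^n-1\}$) and $|U'(\mathbb{Z}_{p^n})| = 4$ when $p=2, n\geq 3$; together with $|U(\mathbb{Z}_{p^n})| = p^n - p^{n-1}$ this yields $|U''| = p^n - p^{n-1} - 2$ (odd $p$) or $2^n - 2^{n-1} - 4$ ($p=2, n\geq 3$), and dividing by $2$ produces exactly the stated coefficients $\tfrac{p^n-p^{n-1}}{2}-1$ and $2^{n-1}-2^{n-2}-2$.

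One subtlety worth handling carefully: when $p = 2$ and $n \ge 3$, the four self-inverse units $1, 2^{n-1}-1, 2^{n-1}+1, 2^n-1$ must genuinely be distinct (which they are for $n \ge 3$, since $2^{n-1}-1 < 2^{n-1}+1 < 2^n-1$ and all are odd and below $2^n$), and one should double-check that $|U''(\mathbb{Z}_{2^n})|$ is nonnegative, i.e.\ $2^n - 2^{n-1} \geq 4$, which holds precisely for $n \ge 3$ — consistent with the hypothesis. Similarly for odd $p$ one needs $p^n - p^{n-1} \geq 2$, which always holds. I would also remark that the degree formula from Theorem~\ref{theorem_degree} is consistent with this picture: plugging $|Id(R)| = 2$ and $O_e = 0$ gives $\deg(x) = 0$ for $u \in U'(R)$ and $\deg(x) = 1$ for $u \in U''(R)$, confirming the isolated-vertex / edge dichotomy. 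The main obstacle, such as it is, is not conceptual but bookkeeping: making sure the orbit-counting argument is stated cleanly (each $K_2$ corresponds to one unordered pair $\{u,u^{-1}\}$, so the number of edges is half the number of non-self-inverse units) and that the arithmetic of $\varphi(p^n)$ minus the self-inverse count divides by $2$ exactly, which it does because non-self-inverse units come in inverse pairs.
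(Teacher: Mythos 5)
Your proposal is correct and follows essentially the same route as the paper: reduce to the single nontrivial idempotent $1$ (so adjacency is governed entirely by $uv=1$), apply Lemma~\ref{lemma_U'U''(R)} to count the self-inverse units giving the $K_1$ components, and pair the remaining units with their distinct inverses to obtain the $K_2$ count. Your additional checks (distinctness of the four involutions for $n\ge 3$, consistency with the degree formula of Theorem~\ref{theorem_degree}) are sound but not needed beyond what the paper does.
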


\begin{proof}
    Suppose $p=2$ and $n=1$, we have $\mathbb{Z}_{p^n}=\mathbb{Z}_2$, then $Cl_2(\mathbb{Z}_2)=K_1$. Suppose $p=2$ and $n=2$, we have $Cl_2(\mathbb{Z}_{p^n})=Cl_2(\mathbb{Z}_4)=2K_1$.
    For any prime number and any natural number not included in the cases above, the following holds:
    \begin{align*}
    V(Cl_2(\mathbb{Z}_{p^n}))&=\{(1,u): u \in U(\mathbb{Z}_{p^n})\}\\
    &=\{(1,u): u \in \mathbb{Z}_{p^n} \setminus \langle p \rangle\}.
    \end{align*}
    Using Lemma \ref{lemma_U'U''(R)}, we get
    \begin{align*}
        |U'(\mathbb{Z}_{p^n})|=\begin{cases}
            2, &\text{ if } p \neq 2,\\
            4, &\text{ if } p = 2 \text{ and } n \geq 3.
        \end{cases}
    \end{align*}
    Since $|U''(\mathbb{Z}_{p^n})|=|U(\mathbb{Z}_{p^n})|-|U'(\mathbb{Z}_{p^n})|$, we derive
    \begin{align*}
        |U''(\mathbb{Z}_{p^n})|=\begin{cases}
            p^n-p^{n-1}-2, &\text{ if } p \neq 2,\\
            2^n-2^{n-1}-4, &\text{ if } p = 2 \text{ and } n \geq 3.
        \end{cases}
    \end{align*}
    Consequently, we obtain
    \begin{align*}
        Cl_2(\mathbb{Z}_{p^n})&=\begin{cases}
            2 K_1 \cup \frac{p^n-p^{n-1}-2}{2} K_2, &\text{ if } p \neq 2,\\
            4 K_1 \cup \frac{2^n-2^{n-1}-4}{2} K_2, &\text{ if } p = 2 \text{ and } n \geq 3
        \end{cases}\\
        Cl_2(\mathbb{Z}_{p^n})&=\begin{cases}
            2K_1 \cup \left(\frac{p^n-p^{n-1}}{2}-1 \right) K_2, &\text{ if } p \neq 2,\\
            4K_1 \cup \left({2^{n-1}-2^{n-2}}-2 \right) K_2, &\text{ jika } p = 2 \text{ and } n \geq 3.
        \end{cases}\\
    \end{align*}
\end{proof}
% Based on the structure of the clean graph over the ring $\mathbb{Z}_{p^n}$ in Theorem \ref{teo_strgrafZpn}, the Zagreb indices and the Randić index of the graph $Cl_2(\mathbb{Z}_{p^n})$ are obtained as follows.
% \begin{align*}
%     M_1(Cl_2(\mathbb{Z}_{p^n}))=\begin{cases}
%         0, &\text{ if } p=2, n=1,2,3,\\
%         2^n - 2^{n-1} -4, &\text{ if } p=2, n\geq 4,\\
%         p^n - p^{n-1}-2, &\text{ if } p \neq 2, n \geq 1
%     \end{cases}
% \end{align*}
% and 
% \begin{align*}
%     M_2(Cl_2(\mathbb{Z}_{p^n}))=R(Cl_2(\mathbb{Z}_{p^n}))=\begin{cases}
%         0, &\text{ if } p=2, n=1,2,3,\\
%         2^{n-1} - 2^{n-2} -2, &\text{ if } p=2, n\geq 4,\\
%         \frac{p^n - p^{n-1}}{2}-1, &\text{ if } p \neq 2, n \geq 1.
%     \end{cases}
% \end{align*}

We now turn to the clean graph over the ring \( \mathbb{Z}_n \) with \( n \neq p^m \) for any prime number \( p \) and natural number \( m \). In this discussion, the structure of the clean graph over \( \mathbb{Z}_{p^nq^m} \) is presented for any two distinct prime numbers \( p,q \) and natural numbers \(n,m\). To obtain the structure of this graph, the Shuriken graph is first defined as follows.

\begin{definition}
    Given $t=2^k$, where $k \in \mathbb{Z}^+ \cup \{0\}$ and $n$ is a multiple of $t$. Shuriken graph $Sh^t_n$ is a graph that is isomorphic to the graph $G=(V(G),E(G))$ where
    $$V(G) = \{a_i, b_i, c_i : i = 1, 2, \dots, n\}$$
    and
    \begin{align*}  
		E(G) = &\{a_i b_j : i, j \in \{1, 2, \dots, n\}\} \text{ }\cup  
		\{a_i c_i, b_i c_i: i=1,2,\dots,t\} \text{ } \cup \\ & \{a_i c_{n+t+1-i}, b_i c_{n+t+1-i}: t+1 \leq i \leq n\} \text{ } \cup \\&\left\{a_i a_{n+t+1-i}, b_i b_{n+t+1-i}, c_i c_{n+t+1-i}: t+1 \leq i \leq \frac{n+t}{2}\right\}.
	\end{align*}  
\end{definition}

Examples of Shuriken graphs $Sh^2_6$, $Sh^4_{12}$, and $Sh^8_{16}$ are presented in Figures \ref{grafSh4} and \ref{grafSh8}.

\begin{figure}[H]
		\begin{center} 
			\resizebox{0.43\textwidth}{!}{\begin{tikzpicture}  
			[scale=.9,auto=center,roundnode/.style={circle,fill=blue!40}]
			\node[roundnode] (a1) at (39.22,23.54) {$a_1$};  
			\node[roundnode] (a2) at (45.73, 23.47)  {$b_1$};  
			\node[roundnode] (a3) at (42.63, 29.5)  {$c_1$};
			\node[roundnode] (a4) at (31.15, 15.96) {$a_2$};
			\node[roundnode] (a5) at (34.31, 20.74)  {$b_2$};
			\node[roundnode] (a6) at (27.8, 21.67) {$c_5$}; 
			\node[roundnode] (a7) at (33.56, 3.82) {$a_3$};  
			\node[roundnode] (a8) at (30.68, 9.5)  {$b_3$};  
			\node[roundnode] (a9) at (27.77,4.34)  {$c_4$};
			\node[roundnode] (a10) at (51.07, 20.3) {$a_4$};
			\node[roundnode] (a11) at (54.03, 15.18)  {$b_4$};
			\node[roundnode] (a12) at (57.43, 20.62) {$c_3$}; 
			\node[roundnode] (a13) at (54.04, 8.68) {$a_5$};  
			\node[roundnode] (a14) at (51.05, 3.49)  {$b_5$};  
			\node[roundnode] (a15) at (57.8, 4.09)  {$c_2$};
			\node[roundnode] (a16) at (46.19, 0.47) {$a_6$};
			\node[roundnode] (a17) at (38.96, 0.4)  {$b_6$};
			\node[roundnode] (a18) at (42.56, -5.09) {$c_6$};
	
			\draw (a1) -- (a2);  
			\draw (a1) -- (a5);  
			\draw (a1) -- (a8);
			\draw (a1) -- (a11); 
			\draw (a1) -- (a14);
			\draw (a1) -- (a17);
			\draw (a4) -- (a2);  
			\draw (a4) -- (a5);  
			\draw (a4) -- (a8);
			\draw (a4) -- (a11); 
			\draw (a4) -- (a14);
			\draw (a4) -- (a17);
			\draw (a7) -- (a2);  
			\draw (a7) -- (a5);  
			\draw (a7) -- (a8);
			\draw (a7) -- (a11);
			\draw (a7) -- (a14);
			\draw (a7) -- (a17);
			\draw (a10) -- (a2);  
			\draw (a10) -- (a5);  
			\draw (a10) -- (a8);
			\draw (a10) -- (a11);
			\draw (a10) -- (a14);
			\draw (a10) -- (a17);
			\draw (a13) -- (a2);  
			\draw (a13) -- (a5);  
			\draw (a13) -- (a8);
			\draw (a13) -- (a11);
			\draw (a13) -- (a14);
			\draw (a13) -- (a17);
			\draw (a16) -- (a2);  
			\draw (a16) -- (a5);  
			\draw (a16) -- (a8);
			\draw (a16) -- (a11);
			\draw (a16) -- (a14);
			\draw (a16) -- (a17);
			\draw (a1) -- (a3);  
			\draw (a2) -- (a3);  
			\draw (a4) -- (a6);
			\draw (a5) -- (a6);
			\draw (a7) -- (a9);  
			\draw (a8) -- (a9);  
			\draw (a10) -- (a12);
			\draw (a11) -- (a12);
			\draw (a13) -- (a15);
			\draw (a14) -- (a15);
			\draw (a16) -- (a18);
			\draw (a17) -- (a18);
			\draw (a4) -- (a13);  
			\draw (a5) -- (a14);  
			\draw (a6) -- (a15);
			%\draw (a4) -- (a15);  
			%\draw (a5) -- (a15);  
			%\draw (a6) -- (a13);
			%\draw (a6) -- (a14);
			\draw (a7) -- (a10);  
			\draw (a8) -- (a11);  
			\draw (a9) -- (a12);
			%\draw (a7) -- (a12);  
			%\draw (a8) -- (a12);  
			%\draw (a9) -- (a10);
			%\draw (a9) -- (a11);
		\end{tikzpicture}}\resizebox{0.55\textwidth}{!}{\begin{tikzpicture}  
				[scale=.9,auto=center,roundnode/.style={circle,fill=blue!40}]
				\node[roundnode] (a1) at (-0.87, 5.67) {$a_1$};  
				\node[roundnode] (a2) at (0.75, 5.67)  {$b_1$};  
				\node[roundnode] (a3) at (-0.01, 7.33)  {$c_1$};
				\node[roundnode] (a4) at (5.67, 0.85) {$a_2$};
				\node[roundnode] (a5) at (5.67, -0.92)  {$b_2$};
				\node[roundnode] (a6) at (7.39, 0) {$c_2$}; 
                \node[roundnode] (a7) at (0.85, -5.67) {$a_3$};  
				\node[roundnode] (a8) at (-0.65, -5.67)  {$b_3$};  
				\node[roundnode] (a9) at (0, -7.32)  {$c_3$};
				\node[roundnode] (a10) at (-5.67, -0.94) {$a_4$};
				\node[roundnode] (a11) at (-5.67, 0.75)  {$b_4$};
				\node[roundnode] (a12) at (-7.38, 0) {$c_4$}; 
                \node[roundnode] (a13) at (2.15, 5.32) {$a_5$};  
				\node[roundnode] (a14) at (3.47, 4.57)  {$b_5$};  
				\node[roundnode] (a15) at (3.58, 6.44)  {$c_{12}$};
				\node[roundnode] (a16) at (4.48, 3.58) {$a_6$};
				\node[roundnode] (a17) at (5.29, 2.23)  {$b_6$};
				\node[roundnode] (a18) at (6.34, 3.92) {$c_{11}$}; 
                \node[roundnode] (a19) at (5.2, -2.43) {$a_7$};  
				\node[roundnode] (a20) at (4.49, -3.57)  {$b_7$};  
				\node[roundnode] (a21) at (6.28, -3.76)  {$c_{10}$};
				\node[roundnode] (a22) at (3.46, -4.57) {$a_8$};
				\node[roundnode] (a23) at (2.22, -5.29)  {$b_8$};
				\node[roundnode] (a24) at (3.68, -6.28) {$c_9$}; 
                \node[roundnode] (a25) at (-2.17, -5.31) {$a_{12}$};  
				\node[roundnode] (a26) at (-3.48, -4.56)  {$b_{12}$};  
				\node[roundnode] (a27) at (-3.56, -6.32)  {$c_5$};
				\node[roundnode] (a28) at (-4.48, -3.59) {$a_{11}$};
				\node[roundnode] (a29) at (-5.19, -2.45)  {$b_{11}$};
				\node[roundnode] (a30) at (-6.1, -3.8) {$c_6$}; 
                \node[roundnode] (a31) at (-5.25, 2.32) {$a_{10}$};  
				\node[roundnode] (a32) at (-4.45, 3.62)  {$b_{10}$};  
				\node[roundnode] (a33) at (-6.29, 3.86)  {$c_7$};
				\node[roundnode] (a34) at (-3.43, 4.6) {$a_{9}$};
				\node[roundnode] (a35) at (-2.2, 5.3)  {$b_{9}$};
				\node[roundnode] (a36) at (-3.62, 6.44) {$c_8$};

				\draw (a1) -- (a2);  
				\draw (a1) -- (a5);  
				\draw (a1) -- (a8);
				\draw (a1) -- (a11); 
                \draw (a1) -- (a14);  
				\draw (a1) -- (a17);  
				\draw (a1) -- (a20);
				\draw (a1) -- (a23); 
                \draw (a1) -- (a26);  
				\draw (a1) -- (a29);  
				\draw (a1) -- (a32);
				\draw (a1) -- (a35); 
                \draw (a4) -- (a2);  
				\draw (a4) -- (a5);  
				\draw (a4) -- (a8);
				\draw (a4) -- (a11); 
                \draw (a4) -- (a14);  
				\draw (a4) -- (a17);  
				\draw (a4) -- (a20);
				\draw (a4) -- (a23); 
                \draw (a4) -- (a26);  
				\draw (a4) -- (a29);  
				\draw (a4) -- (a32);
				\draw (a4) -- (a35); 
                \draw (a7) -- (a2);  
				\draw (a7) -- (a5);  
				\draw (a7) -- (a8);
				\draw (a7) -- (a11); 
                \draw (a7) -- (a14);  
				\draw (a7) -- (a17);  
				\draw (a7) -- (a20);
				\draw (a7) -- (a23); 
                \draw (a7) -- (a26);  
				\draw (a7) -- (a29);  
				\draw (a7) -- (a32);
				\draw (a7) -- (a35); 
                \draw (a10) -- (a2);  
				\draw (a10) -- (a5);  
				\draw (a10) -- (a8);
				\draw (a10) -- (a11); 
                \draw (a10) -- (a14);  
				\draw (a10) -- (a17);  
				\draw (a10) -- (a20);
				\draw (a10) -- (a23); 
                \draw (a10) -- (a26);  
				\draw (a10) -- (a29);  
				\draw (a10) -- (a32);
				\draw (a10) -- (a35);
                \draw (a13) -- (a2);  
				\draw (a13) -- (a5);  
				\draw (a13) -- (a8);
				\draw (a13) -- (a11); 
                \draw (a13) -- (a14);  
				\draw (a13) -- (a17);  
				\draw (a13) -- (a20);
				\draw (a13) -- (a23); 
                \draw (a13) -- (a26);  
				\draw (a13) -- (a29);  
				\draw (a13) -- (a32);
				\draw (a13) -- (a35); 
                \draw (a16) -- (a2);  
				\draw (a16) -- (a5);  
				\draw (a16) -- (a8);
				\draw (a16) -- (a11); 
                \draw (a16) -- (a14);  
				\draw (a16) -- (a17);  
				\draw (a16) -- (a20);
				\draw (a16) -- (a23); 
                \draw (a16) -- (a26);  
				\draw (a16) -- (a29);  
				\draw (a16) -- (a32);
				\draw (a16) -- (a35); 
                \draw (a19) -- (a2);  
				\draw (a19) -- (a5);  
				\draw (a19) -- (a8);
				\draw (a19) -- (a11); 
                \draw (a19) -- (a14);  
				\draw (a19) -- (a17);  
				\draw (a19) -- (a20);
				\draw (a19) -- (a23); 
                \draw (a19) -- (a26);  
				\draw (a19) -- (a29);  
				\draw (a19) -- (a32);
				\draw (a19) -- (a35); 
                \draw (a22) -- (a2);  
				\draw (a22) -- (a5);  
				\draw (a22) -- (a8);
				\draw (a22) -- (a11); 
                \draw (a22) -- (a14);  
				\draw (a22) -- (a17);  
				\draw (a22) -- (a20);
				\draw (a22) -- (a23); 
                \draw (a22) -- (a26);  
				\draw (a22) -- (a29);  
				\draw (a22) -- (a32);
				\draw (a22) -- (a35); 
                \draw (a25) -- (a2);  
				\draw (a25) -- (a5);  
				\draw (a25) -- (a8);
				\draw (a25) -- (a11); 
                \draw (a25) -- (a14);  
				\draw (a25) -- (a17);  
				\draw (a25) -- (a20);
				\draw (a25) -- (a23); 
                \draw (a25) -- (a26);  
				\draw (a25) -- (a29);  
				\draw (a25) -- (a32);
				\draw (a25) -- (a35); 
                \draw (a28) -- (a2);  
				\draw (a28) -- (a5);  
				\draw (a28) -- (a8);
				\draw (a28) -- (a11); 
                \draw (a28) -- (a14);  
				\draw (a28) -- (a17);  
				\draw (a28) -- (a20);
				\draw (a28) -- (a23); 
                \draw (a28) -- (a26);  
				\draw (a28) -- (a29);  
				\draw (a28) -- (a32);
				\draw (a28) -- (a35); 
                \draw (a31) -- (a2);  
				\draw (a31) -- (a5);  
				\draw (a31) -- (a8);
				\draw (a31) -- (a11); 
                \draw (a31) -- (a14);  
				\draw (a31) -- (a17);  
				\draw (a31) -- (a20);
				\draw (a31) -- (a23); 
                \draw (a31) -- (a26);  
				\draw (a31) -- (a29);  
				\draw (a31) -- (a32);
				\draw (a31) -- (a35); 
                \draw (a34) -- (a2);  
				\draw (a34) -- (a5);  
				\draw (a34) -- (a8);
				\draw (a34) -- (a11); 
                \draw (a34) -- (a14);  
				\draw (a34) -- (a17);  
				\draw (a34) -- (a20);
				\draw (a34) -- (a23); 
                \draw (a34) -- (a26);  
				\draw (a34) -- (a29);  
				\draw (a34) -- (a32);
				\draw (a34) -- (a35); 
                \draw (a1) -- (a3);  
				\draw (a2) -- (a3);  
				\draw (a4) -- (a6);
				\draw (a5) -- (a6);
                \draw (a7) -- (a9);  
				\draw (a8) -- (a9);  
				\draw (a10) -- (a12);
				\draw (a11) -- (a12);
                \draw (a13) -- (a15);  
				\draw (a14) -- (a15);  
				\draw (a16) -- (a18);
				\draw (a17) -- (a18);
                \draw (a19) -- (a21);  
				\draw (a20) -- (a21);  
				\draw (a22) -- (a24);
				\draw (a23) -- (a24);
                \draw (a25) -- (a27);  
				\draw (a26) -- (a27);  
				\draw (a28) -- (a30);
				\draw (a29) -- (a30);
                \draw (a31) -- (a33);  
				\draw (a32) -- (a33);  
				\draw (a34) -- (a36);
				\draw (a35) -- (a36);
                \draw (a13) -- (a25);  
				\draw (a14) -- (a26);  
				\draw (a15) -- (a27);
                \draw (a16) -- (a28);  
				\draw (a17) -- (a29);  
				\draw (a18) -- (a30);
                \draw (a19) -- (a31);  
				\draw (a20) -- (a32);  
				\draw (a21) -- (a33);
                \draw (a22) -- (a34);  
				\draw (a23) -- (a35);  
				\draw (a24) -- (a36);
			\end{tikzpicture}}\caption{Graphs $Sh^2_6$ and $Sh^4_{12}$}\label{grafSh4}
		\end{center}
\end{figure}
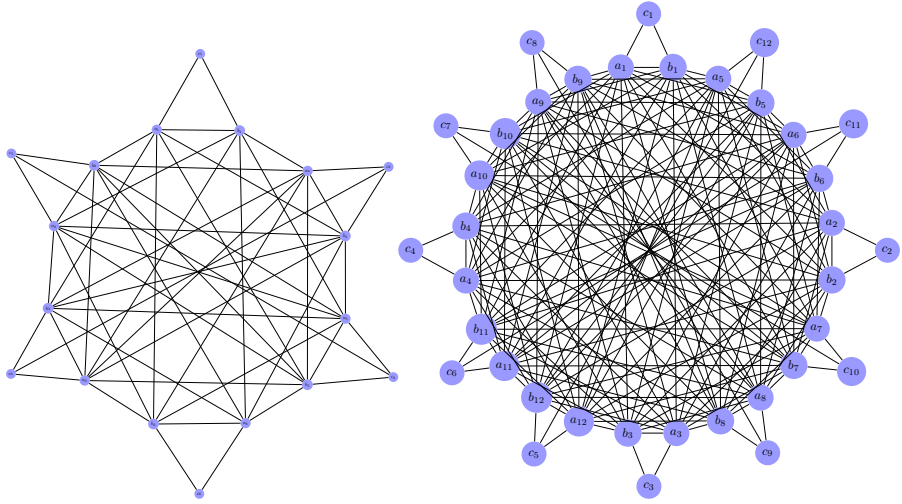
\begin{figure}[H]
    \begin{center}
            \resizebox{0.6\textwidth}{!}{\begin{tikzpicture}  
				[scale=.9,auto=center,roundnode/.style={circle,fill=blue!40}]
				\node[roundnode] (a1) at (-0.99, 11.89) {$a_1$};  
				\node[roundnode] (a2) at (0.99, 11.89)  {$b_1$};  
				\node[roundnode] (a3) at (0, 13.67)  {$c_1$};
				\node[roundnode] (a4) at (7.69, 9.16) {$a_2$};
				\node[roundnode] (a5) at (9.07, 7.75)  {$b_2$};
				\node[roundnode] (a6) at (9.7, 9.76) {$c_2$}; 
               \node[roundnode] (a7) at (11.89, 0.99) {$a_3$};
				\node[roundnode] (a8) at (11.89, -0.99)  {$b_3$};
				\node[roundnode] (a9) at (13.67, -0.01) {$c_3$}; 
				\node[roundnode] (a10) at (9.1, -7.72) {$a_4$};
				\node[roundnode] (a11) at (7.74, -9.08)  {$b_4$};
				\node[roundnode] (a12) at (9.59, -9.58) {$c_4$}; 
                \node[roundnode] (a13) at (0.99, -11.87) {$a_5$};  
				\node[roundnode] (a14) at (-1, -11.86)  {$b_5$};  
				\node[roundnode] (a15) at (0, -13.65)  {$c_{5}$};
				\node[roundnode] (a16) at (-7.66, -9.14) {$a_6$};
				\node[roundnode] (a17) at (-9.16, -7.64)  {$b_6$};
				\node[roundnode] (a18) at (-9.78, -9.7) {$c_{6}$}; 
                \node[roundnode] (a19) at (-11.89, -0.99) {$a_7$};  
				\node[roundnode] (a20) at (-11.89, 0.99)  {$b_7$};  
				\node[roundnode] (a21) at (-13.67, 0)  {$c_{7}$};
				\node[roundnode] (a22) at (-9.15, 7.65) {$a_8$};
				\node[roundnode] (a23) at (-7.64, 9.16)  {$b_8$};
				\node[roundnode] (a24) at (-9.7, 9.73) {$c_8$}; 
                \node[roundnode] (a25) at (3.72, 11.33) {$a_{9}$};  
				\node[roundnode] (a26) at (5.63, 10.52)  {$b_{9}$};  
				\node[roundnode] (a27) at (5.41, 12.6)  {$c_{16}$};
				\node[roundnode] (a28) at (10.43, 5.8) {$a_{10}$};
				\node[roundnode] (a29) at (11.31, 3.8)  {$b_{10}$};
				\node[roundnode] (a30) at (12.65, 5.67) {$c_{15}$}; 
                \node[roundnode] (a31) at (11.39, -3.54) {$a_{11}$};  
				\node[roundnode] (a32) at (10.66, -5.35)  {$b_{11}$};  
				\node[roundnode] (a33) at (12.57, -5.04)  {$c_{14}$};
				\node[roundnode] (a34) at (5.78, -10.44) {$a_{12}$};
				\node[roundnode] (a35) at (3.72, -11.33)  {$b_{12}$};
				\node[roundnode] (a36) at (5.5, -12.68) {$c_{13}$}; 
                \node[roundnode] (a37) at (-3.66, -11.35) {$a_{16}$};  
				\node[roundnode] (a38) at (-5.71, -10.47)  {$b_{16}$};  
				\node[roundnode] (a39) at (-5.44, -12.51)  {$c_{9}$};
				\node[roundnode] (a40) at (-10.6, -5.48) {$a_{15}$};
				\node[roundnode] (a41) at (-11.39, -3.54)  {$b_{15}$};
				\node[roundnode] (a42) at (-12.74, -5.24) {$c_{10}$}; 
                \node[roundnode] (a43) at (-11.37, 3.59) {$a_{14}$};  
				\node[roundnode] (a44) at (-10.59, 5.49)  {$b_{14}$};  
				\node[roundnode] (a45) at (-12.59, 5.27)  {$c_{11}$};
				\node[roundnode] (a46) at (-5.58, 10.54) {$a_{13}$};
				\node[roundnode] (a47) at (-3.61, 11.37)  {$b_{13}$};
				\node[roundnode] (a48) at (-5.32, 12.65) {$c_{12}$};

				\draw (a1) -- (a2);  
				\draw (a1) -- (a5);  
				\draw (a1) -- (a8);
				\draw (a1) -- (a11); 
                \draw (a1) -- (a14);  
				\draw (a1) -- (a17);  
				\draw (a1) -- (a20);
				\draw (a1) -- (a23); 
                \draw (a1) -- (a26);  
				\draw (a1) -- (a29);  
				\draw (a1) -- (a32);
				\draw (a1) -- (a35); 
                \draw (a1) -- (a38);
				\draw (a1) -- (a41); 
                \draw (a1) -- (a44);  
				\draw (a1) -- (a47);  
                \draw (a4) -- (a2);  
				\draw (a4) -- (a5);  
				\draw (a4) -- (a8);
				\draw (a4) -- (a11); 
                \draw (a4) -- (a14);  
				\draw (a4) -- (a17);  
				\draw (a4) -- (a20);
				\draw (a4) -- (a23); 
                \draw (a4) -- (a26);  
				\draw (a4) -- (a29);  
				\draw (a4) -- (a32);
				\draw (a4) -- (a35); 
                \draw (a4) -- (a38);  
				\draw (a4) -- (a41);  
				\draw (a4) -- (a44);
				\draw (a4) -- (a47); 
                \draw (a7) -- (a2);  
				\draw (a7) -- (a5);  
				\draw (a7) -- (a8);
				\draw (a7) -- (a11); 
                \draw (a7) -- (a14);  
				\draw (a7) -- (a17);  
				\draw (a7) -- (a20);
				\draw (a7) -- (a23); 
                \draw (a7) -- (a26);  
				\draw (a7) -- (a29);  
				\draw (a7) -- (a32);
				\draw (a7) -- (a35); 
                \draw (a7) -- (a38);  
				\draw (a7) -- (a41);  
				\draw (a7) -- (a44);
				\draw (a7) -- (a47); 
                \draw (a10) -- (a2);  
				\draw (a10) -- (a5);  
				\draw (a10) -- (a8);
				\draw (a10) -- (a11); 
                \draw (a10) -- (a14);  
				\draw (a10) -- (a17);  
				\draw (a10) -- (a20);
				\draw (a10) -- (a23); 
                \draw (a10) -- (a26);  
				\draw (a10) -- (a29);  
				\draw (a10) -- (a32);
				\draw (a10) -- (a35);
                \draw (a10) -- (a38);  
				\draw (a10) -- (a41);  
				\draw (a10) -- (a44);
				\draw (a10) -- (a47);
                \draw (a13) -- (a2);  
				\draw (a13) -- (a5);  
				\draw (a13) -- (a8);
				\draw (a13) -- (a11); 
                \draw (a13) -- (a14);  
				\draw (a13) -- (a17);  
				\draw (a13) -- (a20);
				\draw (a13) -- (a23); 
                \draw (a13) -- (a26);  
				\draw (a13) -- (a29);  
				\draw (a13) -- (a32);
				\draw (a13) -- (a35); 
                \draw (a13) -- (a38);  
				\draw (a13) -- (a41);  
				\draw (a13) -- (a44);
				\draw (a13) -- (a47); 
                \draw (a16) -- (a2);  
				\draw (a16) -- (a5);  
				\draw (a16) -- (a8);
				\draw (a16) -- (a11); 
                \draw (a16) -- (a14);  
				\draw (a16) -- (a17);  
				\draw (a16) -- (a20);
				\draw (a16) -- (a23); 
                \draw (a16) -- (a26);  
				\draw (a16) -- (a29);  
				\draw (a16) -- (a32);
				\draw (a16) -- (a35); 
                \draw (a16) -- (a38);  
				\draw (a16) -- (a41);  
				\draw (a16) -- (a44);
				\draw (a16) -- (a47); 
                \draw (a19) -- (a2);  
				\draw (a19) -- (a5);  
				\draw (a19) -- (a8);
				\draw (a19) -- (a11); 
                \draw (a19) -- (a14);  
				\draw (a19) -- (a17);  
				\draw (a19) -- (a20);
				\draw (a19) -- (a23); 
                \draw (a19) -- (a26);  
				\draw (a19) -- (a29);  
				\draw (a19) -- (a32);
				\draw (a19) -- (a35); 
                \draw (a19) -- (a38);  
				\draw (a19) -- (a41);  
				\draw (a19) -- (a44);
				\draw (a19) -- (a47); 
                \draw (a22) -- (a2);  
				\draw (a22) -- (a5);  
				\draw (a22) -- (a8);
				\draw (a22) -- (a11); 
                \draw (a22) -- (a14);  
				\draw (a22) -- (a17);  
				\draw (a22) -- (a20);
				\draw (a22) -- (a23); 
                \draw (a22) -- (a26);  
				\draw (a22) -- (a29);  
				\draw (a22) -- (a32);
				\draw (a22) -- (a35); 
                \draw (a22) -- (a38);  
				\draw (a22) -- (a41);  
				\draw (a22) -- (a44);
				\draw (a22) -- (a47); 
                \draw (a25) -- (a2);  
				\draw (a25) -- (a5);  
				\draw (a25) -- (a8);
				\draw (a25) -- (a11); 
                \draw (a25) -- (a14);  
				\draw (a25) -- (a17);  
				\draw (a25) -- (a20);
				\draw (a25) -- (a23); 
                \draw (a25) -- (a26);  
				\draw (a25) -- (a29);  
				\draw (a25) -- (a32);
				\draw (a25) -- (a35); 
                \draw (a25) -- (a38);  
				\draw (a25) -- (a41);  
				\draw (a25) -- (a44);
				\draw (a25) -- (a47); 
                \draw (a28) -- (a2);  
				\draw (a28) -- (a5);  
				\draw (a28) -- (a8);
				\draw (a28) -- (a11); 
                \draw (a28) -- (a14);  
				\draw (a28) -- (a17);  
				\draw (a28) -- (a20);
				\draw (a28) -- (a23); 
                \draw (a28) -- (a26);  
				\draw (a28) -- (a29);  
				\draw (a28) -- (a32);
				\draw (a28) -- (a35); 
                \draw (a28) -- (a38);  
				\draw (a28) -- (a41);  
				\draw (a28) -- (a44);
				\draw (a28) -- (a47); 
                \draw (a31) -- (a2);  
				\draw (a31) -- (a5);  
				\draw (a31) -- (a8);
				\draw (a31) -- (a11); 
                \draw (a31) -- (a14);  
				\draw (a31) -- (a17);  
				\draw (a31) -- (a20);
				\draw (a31) -- (a23); 
                \draw (a31) -- (a26);  
				\draw (a31) -- (a29);  
				\draw (a31) -- (a32);
				\draw (a31) -- (a35); 
                \draw (a31) -- (a38);  
				\draw (a31) -- (a41);  
				\draw (a31) -- (a44);
				\draw (a31) -- (a47); 
                \draw (a34) -- (a2);  
				\draw (a34) -- (a5);  
				\draw (a34) -- (a8);
				\draw (a34) -- (a11); 
                \draw (a34) -- (a14);  
				\draw (a34) -- (a17);  
				\draw (a34) -- (a20);
				\draw (a34) -- (a23); 
                \draw (a34) -- (a26);  
				\draw (a34) -- (a29);  
				\draw (a34) -- (a32);
				\draw (a34) -- (a35); 
                \draw (a34) -- (a38);  
				\draw (a34) -- (a41);  
				\draw (a34) -- (a44);
				\draw (a34) -- (a47); 
                \draw (a37) -- (a2);  
				\draw (a37) -- (a5);  
				\draw (a37) -- (a8);
				\draw (a37) -- (a11); 
                \draw (a37) -- (a14);  
				\draw (a37) -- (a17);  
				\draw (a37) -- (a20);
				\draw (a37) -- (a23); 
                \draw (a37) -- (a26);  
				\draw (a37) -- (a29);  
				\draw (a37) -- (a32);
				\draw (a37) -- (a35); 
                \draw (a37) -- (a38);  
				\draw (a37) -- (a41);  
				\draw (a37) -- (a44);
				\draw (a37) -- (a47);
                \draw (a40) -- (a2);  
				\draw (a40) -- (a5);  
				\draw (a40) -- (a8);
				\draw (a40) -- (a11); 
                \draw (a40) -- (a14);  
				\draw (a40) -- (a17);  
				\draw (a40) -- (a20);
				\draw (a40) -- (a23); 
                \draw (a40) -- (a26);  
				\draw (a40) -- (a29);  
				\draw (a40) -- (a32);
				\draw (a40) -- (a35); 
                \draw (a40) -- (a38);  
				\draw (a40) -- (a41);  
				\draw (a40) -- (a44);
				\draw (a40) -- (a47); 
                \draw (a43) -- (a2);  
				\draw (a43) -- (a5);  
				\draw (a43) -- (a8);
				\draw (a43) -- (a11); 
                \draw (a43) -- (a14);  
				\draw (a43) -- (a17);  
				\draw (a43) -- (a20);
				\draw (a43) -- (a23); 
                \draw (a43) -- (a26);  
				\draw (a43) -- (a29);  
				\draw (a43) -- (a32);
				\draw (a43) -- (a35); 
                \draw (a43) -- (a38);  
				\draw (a43) -- (a41);  
				\draw (a43) -- (a44);
				\draw (a43) -- (a47); 
                \draw (a46) -- (a2);  
				\draw (a46) -- (a5);  
				\draw (a46) -- (a8);
				\draw (a46) -- (a11); 
                \draw (a46) -- (a14);  
				\draw (a46) -- (a17);  
				\draw (a46) -- (a20);
				\draw (a46) -- (a23); 
                \draw (a46) -- (a26);  
				\draw (a46) -- (a29);  
				\draw (a46) -- (a32);
				\draw (a46) -- (a35); 
                \draw (a46) -- (a38);  
				\draw (a46) -- (a41);  
				\draw (a46) -- (a44);
				\draw (a46) -- (a47); 
                \draw (a1) -- (a3);  
				\draw (a2) -- (a3);  
				\draw (a4) -- (a6);
				\draw (a5) -- (a6);
                \draw (a7) -- (a9);  
				\draw (a8) -- (a9);  
				\draw (a10) -- (a12);
				\draw (a11) -- (a12);
                \draw (a13) -- (a15);  
				\draw (a14) -- (a15);  
				\draw (a16) -- (a18);
				\draw (a17) -- (a18);
                \draw (a19) -- (a21);  
				\draw (a20) -- (a21);  
				\draw (a22) -- (a24);
				\draw (a23) -- (a24);
                \draw (a25) -- (a27);  
				\draw (a26) -- (a27);  
				\draw (a28) -- (a30);
				\draw (a29) -- (a30);
                \draw (a31) -- (a33);  
				\draw (a32) -- (a33);  
				\draw (a34) -- (a36);
				\draw (a35) -- (a36);
                \draw (a37) -- (a39);
				\draw (a38) -- (a39);
                \draw (a40) -- (a42);  
				\draw (a41) -- (a42);  
				\draw (a43) -- (a45);
				\draw (a44) -- (a45);
                \draw (a46) -- (a48);
				\draw (a47) -- (a48);
                
                \draw (a25) -- (a37);  
				\draw (a26) -- (a38);  
				\draw (a27) -- (a39);
                \draw (a28) -- (a40);  
				\draw (a29) -- (a41);  
				\draw (a30) -- (a42);
                \draw (a31) -- (a43);  
				\draw (a32) -- (a44);  
				\draw (a33) -- (a45);
                \draw (a34) -- (a46);  
				\draw (a35) -- (a47);  
				\draw (a36) -- (a48);
			\end{tikzpicture}}\caption{Graph $Sh^8_{16}$}\label{grafSh8}
		\end{center}
\end{figure}

% \begin{theorem}\label{cliquechromaticCl2Z2q}
%     For any positive integer number $k$, $t=2^k$, and $n$ is a multiple of $t$, the graph $Sh^t_n$ has the following properties:
%     \begin{align*}
%         \chi(Sh^t_n)=\omega(Sh^t_n)=4.
%     \end{align*}
% \end{theorem}
% \vspace{-0.5cm}
% \begin{proof}
%     For any positive integer number $k$, $t=2^k$, and $n$ is a multiple of $t$, consider vertex set $$X=\{a_{t+1}, b_{t+1}, a_{n}, b_{n}\} \subseteq V(Sh^t_n).$$ Since induced subgraph of $Sh^t_n$ by $X$ is $K_4$, we get $\chi(Sh^t_n) \geq 4$. Now, we can form four groups of vertices, where each vertex in the same group will have the same vertex coloring. Let
%     \begin{align*}
%         X_1&=\left\{a_i: i=1,2,\dots,\frac{n+t}{2}\right\} \cup \left\{c_i: i=t+1,t+2,\dots,\frac{n+t}{2}\right\}\\
%         X_2&=\left\{b_i: i=1,2,\dots,\frac{n+t}{2}\right\}\\
%         X_3&=\left\{c_i: i=1,2,\dots,\frac{n+t}{2}\right\} \cup \left\{a_i: \frac{n+t}{2}+1 \leq i \leq n\right\}\\
%         X_4&=\left\{b_i: \frac{n+t}{2}+1 \leq i \leq n\right\}.
%     \end{align*}
%     It is obtained that any two vertices in the same set for each $X_1, X_2, X_3, X_4$ are not adjacent to each other. Hence, $\chi(Sh^t_n) = 4$. Therefore, $\omega(Sh^t_n) \leq 4$. Due to the existence of an induced subgraph of $Sh^t_n$ by the set $X$, we have $\omega(Sh^t_n)=4$.
% \end{proof}

In the next step, we examine the structure of the clean graph $Cl_2(\mathbb{Z}_{p^nq^m})$ for any distinct prime numbers $p,q$ and natural numbers $n,m$. We know that $\mathbb{Z}_{p^nq^m} = \mathbb{Z}_{p^n} \times \mathbb{Z}_{q^m}$. It is observed that
	\begin{align*}
		V(Cl(\mathbb{Z}_{p^nq^m})) = & \left(Id(\mathbb{Z}_{p^n}) \times Id(\mathbb{Z}_{q^m})\right) \times \left(U(\mathbb{Z}_{p^n}) \times U(\mathbb{Z}_{q^m})\right) \\
		= & \{(0,0), (0,1), (1,0), (1,1)\} \\
        &\times \{(u_1,u_2): u_1 \in U(\mathbb{Z}_{p^n}), u_2 \in U(\mathbb{Z}_{q^m})\},
	\end{align*}
	then,
	$$V(Cl_2(\mathbb{Z}_{p^nq^m})) = \{(0,1), (1,0), (1,1)\} \times \{(u_1,u_2): u_1 \in U(\mathbb{Z}_{p^n}), u_2 \in U(\mathbb{Z}_{q^m})\}.$$
Furthermore, the structure of the graph $Cl_2(\mathbb{Z}_{p^nq^m})$ is obtained, as given in the following theorem.
\begin{theorem}\label{isomorfisCl2Zpnqn}
    For any distinct prime numbers $p,q$ and natural numbers $n,m$, the following holds
    $$Cl_2(\mathbb{Z}_{p^nq^m}) \cong Sh^{|U'(\mathbb{Z}_{p^nq^m})|}_{(p^n-p^{n-1})(q^m-q^{m-1})}.$$
\end{theorem}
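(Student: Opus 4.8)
The plan is to reduce to $\mathbb{Z}_{p^n}\times\mathbb{Z}_{q^m}$ via the Chinese Remainder Theorem and then write down an explicit vertex bijection onto the Shuriken graph. Set $U:=U(\mathbb{Z}_{p^n})\times U(\mathbb{Z}_{q^m})$, let $N:=|U|=(p^n-p^{n-1})(q^m-q^{m-1})$, and write the three nonzero idempotents as $e_1=(0,1)$, $e_2=(1,0)$, $e_3=(1,1)$. Then $V(Cl_2(\mathbb{Z}_{p^nq^m}))$ splits into the three layers $A=\{e_1\}\times U$, $B=\{e_2\}\times U$, $C=\{e_3\}\times U$, each of size $N$. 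First I would tabulate the products of nonzero idempotents: since $\mathbb{Z}_{p^nq^m}$ is commutative, $ef=fe$ always, and one has $e_1e_2=0$ while $e_ie_j\neq 0$ for every other choice $i,j\in\{1,2,3\}$ (in particular $e_i^2=e_i\neq 0$). Feeding this into the adjacency rule of $Cl_2$ gives a complete description of the edges: (a) every vertex of $A$ is adjacent to every vertex of $B$; (b) for $i\in\{1,2\}$, the vertices $(e_i,u)$ and $(e_3,v)$ are adjacent iff $uv=1$; (c) two distinct vertices $(e_i,u),(e_i,v)$ lying in the same layer ($i\in\{1,2,3\}$) are adjacent iff $uv=1$, which forces $v=u^{-1}$ and $u^2\neq 1$.

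The second step is to organise the units and to check that $Sh^t_N$ is even defined for the relevant parameters. By the Chinese Remainder Theorem $U'(\mathbb{Z}_{p^nq^m})=U'(\mathbb{Z}_{p^n})\times U'(\mathbb{Z}_{q^m})$; by Lemma~\ref{lemma_U'U''(R)} together with the small cases $\mathbb{Z}_2$ and $\mathbb{Z}_4$, each factor $|U'(\mathbb{Z}_{p^k})|$ lies in $\{1,2,4\}$ and divides $p^k-p^{k-1}$. Hence $t:=|U'(\mathbb{Z}_{p^nq^m})|$ is a power of $2$ with $t\mid N$, so $Sh^t_N$ is well-defined; moreover $t\geq 2$ (since $p\neq q$ prevents both factors from being $1$) and $N$ is even (as $p^nq^m\geq 6$), so $N+t$ is even. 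Now fix an enumeration $u_1,\dots,u_N$ of $U$ with $\{u_1,\dots,u_t\}=U'(\mathbb{Z}_{p^nq^m})$ and the remaining indices arranged so that $u_i^{-1}=u_{N+t+1-i}$ for every $t<i\leq N$; this is possible because $i\mapsto N+t+1-i$ is a fixed-point-free involution of $\{t+1,\dots,N\}$ and the $N-t$ units not equal to their own inverse split into $(N-t)/2$ mutually inverse pairs.

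Finally I would define $f\colon V(Cl_2(\mathbb{Z}_{p^nq^m}))\to V(Sh^t_N)$ (taking $n=N$ in the definition of the Shuriken graph) by $f(e_1,u_i)=a_i$, $f(e_2,u_i)=b_i$, $f(e_3,u_i)=c_i$, and verify that $f$ is a graph isomorphism by matching the edges layer by layer against the definition of $Sh^t_N$. The complete bipartite family $\{a_ib_j\}$ corresponds to (a). For $i\leq t$ the unit $u_i$ satisfies $u_i^2=1$, so by (b) the vertex $(e_3,u_i)$ is adjacent precisely to $(e_1,u_i)$ and $(e_2,u_i)$, matching $\{a_ic_i,b_ic_i:i\leq t\}$; for $i>t$ one has $u_i^{-1}=u_{N+t+1-i}$, so $(e_3,u_i)$ is adjacent precisely to $(e_1,u_{N+t+1-i})$ and $(e_2,u_{N+t+1-i})$, matching $\{a_ic_{N+t+1-i},b_ic_{N+t+1-i}:t<i\leq N\}$. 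By (c) the only intra-layer edges are $(e_1,u_i)(e_1,u_{N+t+1-i})$, $(e_2,u_i)(e_2,u_{N+t+1-i})$ and $(e_3,u_i)(e_3,u_{N+t+1-i})$ for $t<i\leq N$, which match the last family $\{a_ia_{N+t+1-i},b_ib_{N+t+1-i},c_ic_{N+t+1-i}\}$ (listing $i$ only up to $(N+t)/2$ already produces every such edge). Conversely, any pair not covered above is non-adjacent in both graphs; for instance $(e_1,u_i)$ and $(e_3,u_i)$ with $i>t$ are non-adjacent because $u_i^2\neq 1$, which corresponds to the fact that $i\neq N+t+1-i$. Hence $f$ is an isomorphism, and $Cl_2(\mathbb{Z}_{p^nq^m})\cong Sh^{t}_{N}$.

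The conceptual content here is light; the real work is the bookkeeping in the last step. The one place that genuinely needs care is the ``twisted'' labelling of the $c$-vertices in $Sh^t_N$ --- each $c_j$ with $j>t$ is attached to $a_{N+t+1-j}$ and $b_{N+t+1-j}$ rather than to $a_j$ and $b_j$ --- which must be aligned with the inverse-pairing of the units; this is exactly why the enumeration in the second step is chosen with $u_i^{-1}=u_{N+t+1-i}$, and why one needs $N+t$ even so that that pairing has no fixed index.
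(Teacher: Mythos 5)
Your proposal is correct and follows essentially the same route as the paper's proof: decompose via CRT into the three idempotent layers $(0,1),(1,0),(1,1)$ mapped to the $a_i,b_i,c_i$, enumerate the units with $U'$ first and the rest paired with their inverses under $i\mapsto N+t+1-i$, and check adjacency case by case. The only difference is that you additionally verify that the Shuriken parameters are admissible ($t$ a power of $2$ dividing $N$, $N+t$ even) and that the inverse-pairing enumeration exists, points the paper takes for granted.
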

\begin{proof} 
Assume that $t=|U'(\mathbb{Z}_{p^nq^m})|$ and $k=(p^n-p^{n-1})(q^m-q^{m-1})$. Let 
$$U'(\mathbb{Z}_{p^nq^m})=\{x_1,x_2,\dots,x_t\} \text{ and } U''(\mathbb{Z}_{p^nq^m})=\{x_{t+1},x_{t+2},\dots,x_{k}\},$$
such that $x_ix_{k+t+1-i}=(1,1)$, for any $i=t+1, t+2, \dots,k$. Bijective mapping $$g: V(Cl_2(\mathbb{Z}_{p^nq^m})) \to V(Sh^t_{k})$$ is defined by
        \begin{align*}
            g((0,1),x_i)=a_i,\\
            g((1,0),x_i)=b_i,\\
            g((1,1),x_i)=c_i.
        \end{align*}
for all $i = 1,2, \dots, k$. Let $(e,u), (f,v) \in V(Cl_2(\mathbb{Z}_{p^nq^m}))$ be arbitrary, with $(e,u)(f,v) \in E(Cl_2(\mathbb{Z}_{p^nq^m}))$. This implies that $ef = 0$ or $uv = 1$.
		\begin{enumerate}
			\item If $ef = 0$, then $e=(0,1)$ and $f=(1,0)$, or $e=(1,0)$ and $f=(0,1)$. So, $g(e,u)=a_i$ and $g(f,v)=b_j$, or we get $g(e,u)=b_i$ and $g(f,v)=a_j$ for some $i,j \in \{1,2,\dots,k\}$. Hence, $g(e,u)g(u,v) \in E(Sh^t_k)$.
			\item Suppose $uv = 1$. Since $u, v \in \{x_i: i = 1, 2, \dots, k\}$, the following possibilities occur:
            \begin{enumerate}
                \item If $u = v = x_i$ with $i \in \{1,2,\dots,t\}$ and $e \neq f$, then $$g(e,u),g(f,v) \in \{a_i,b_i,c_i\}$$ where $g(e,u) \neq g(f,v)$. Hence $g(e,u)g(f,v) \in E(Sh^t_k)$. 
                \item If $u = x_i, v = x_{k+t+1-i}$ with $i \in \{t+1, t+2, \dots, k\}$, then $$g(e,u) \in \{a_i,b_i,c_i\} \text{ and } g(f,v) \in \{a_{k+t+1-i}, b_{k+t+1-i}, c_{k+t+1-i}\}.$$ Hence $g(e,u)g(f,v) \in E(Sh^t_k)$.
            \end{enumerate}
		\end{enumerate}
    From all the cases above, it follows that $g(e,u)g(f,v) \in E(Sh^t_k)$.  
    In contrast, consider an arbitrary $(e,u), (f,v) \in V(Cl_2(\mathbb{Z}_{p^nq^m}))$ such that $g(e,u)g(f,v) \in E(Sh^t_k)$. Based on the definition of the Shuriken graph, several possibilities arise as follows.
    \begin{enumerate}
        \item If $g(e,u)=a_i$ and $g(f,v)=b_j$ with $i,j \in \{1,2,\dots,k\}$, then $e=(0,1)$ and $f=(1,0)$, so $(e,u)(f,v) \in E(Cl_2(\mathbb{Z}_{p^nq^m}))$.
        \item If $g(e,u) \in \{a_i, b_i\}$ and $g(f,v)=c_i$ with $i \in \{1,2,\dots,t\}$, then $u=v=x_i$. Hence, $(e,u)(f,v) \in E(Cl_2(\mathbb{Z}_{p^nq^m}))$.
        \item If $g(e,u) \in \{a_i, b_i\}$ and $g(f,v)= c_{k+t+1-i}$ with $i \in \{t+1,t+2,\dots,k\}$, then $u=x_i$ and $v=x_{k+t+1-i}$, so $(e,u)(f,v) \in E(Cl_2(\mathbb{Z}_{p^nq^m}))$.
        \item If $\{g(e,u), g(f,v)\} \in \left\{\{a_i, a_{k+t+1-i}\}, \{b_i, b_{k+t+1-i}\}, \{c_i, c_{k+t+1-i}\}\right\}$ with $i \in \{t+1,t+2,\dots,\frac{k+t}{2}\}$, then $u=x_i$ and $v=x_{k+t+1-i}$. Hence $(e,u)(f,v) \in E(Cl_2(\mathbb{Z}_{p^nq^m}))$.
    \end{enumerate}
    From all the cases above, it follows that $(e,u)(f,v) \in E(Cl_2(\mathbb{Z}_{p^nq^m}))$. Hence, $g$ is a graph isomorphism $Cl_2(\mathbb{Z}_{p^nq^m})$ to $Sh^t_k$.
\end{proof}

\begin{proposition}\label{propisoZpnqm}
    For any distinct prime numbers $p,q$ and natural numbers $n,m$, the following holds
    $$Cl_2(\mathbb{Z}_{p^nq^m}) \cong \begin{cases}
        Sh^2_{q^m-q^{m-1}}, & \text{ if } p^n=2,\\
        Sh^4_{2(q^m-q^{m-1})}, & \text{ if } p^n=4,\\
        Sh^8_{(p^n-p^{n-1})(q^m-q^{m-1})}, & \text{ if } p=2, n \geq 3,\\
        Sh^4_{(p^n-p^{n-1})(q^m-q^{m-1})}, & \text{ if } p,q \neq 2.
    \end{cases}$$
\end{proposition}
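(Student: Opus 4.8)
The plan is to obtain this as a direct corollary of Theorem~\ref{isomorfisCl2Zpnqn}, which already identifies $Cl_2(\mathbb{Z}_{p^nq^m})$ with the Shuriken graph $Sh^{|U'(\mathbb{Z}_{p^nq^m})|}_{(p^n-p^{n-1})(q^m-q^{m-1})}$. All that remains is to evaluate the two parameters $|U'(\mathbb{Z}_{p^nq^m})|$ and $(p^n-p^{n-1})(q^m-q^{m-1})$ in each of the four cases and check that the stated right-hand sides agree.

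First I would record the behaviour of $U'$ under the Chinese Remainder decomposition $\mathbb{Z}_{p^nq^m}\cong\mathbb{Z}_{p^n}\times\mathbb{Z}_{q^m}$: an element $(u_1,u_2)$ of the product is a unit iff $u_1$ and $u_2$ are units, and since squaring is componentwise, $(u_1,u_2)^2=(1,1)$ iff $u_1^2=1$ and $u_2^2=1$. Hence $|U'(\mathbb{Z}_{p^nq^m})|=|U'(\mathbb{Z}_{p^n})|\cdot|U'(\mathbb{Z}_{q^m})|$. Next I would compute $|U'(\mathbb{Z}_{p^n})|$ itself: by Lemma~\ref{lemma_U'U''(R)} it equals $2$ when $p$ is odd (the solutions of $a^2\equiv 1$ being $1$ and $p^n-1$) and $4$ when $p=2$ and $n\geq 3$, while the two small cases outside the hypotheses of that lemma are handled by inspection, giving $|U'(\mathbb{Z}_2)|=1$ and $|U'(\mathbb{Z}_4)|=2$.

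Since the proposition's cases are written with the prime $2$ (when one of the two primes is $2$) playing the role of $p$, and since $Cl_2(\mathbb{Z}_{p^nq^m})=Cl_2(\mathbb{Z}_{q^mp^n})$ lets us relabel the primes freely, the prime $q$ is odd in the first three cases, so $|U'(\mathbb{Z}_{q^m})|=2$ there. Multiplying, the exponent $|U'(\mathbb{Z}_{p^nq^m})|$ equals $1\cdot 2=2$ if $p^n=2$, $2\cdot 2=4$ if $p^n=4$, $4\cdot 2=8$ if $p=2,\ n\geq 3$, and $2\cdot 2=4$ if $p$ and $q$ are both odd; correspondingly $(p^n-p^{n-1})(q^m-q^{m-1})$ becomes $q^m-q^{m-1}$, $2(q^m-q^{m-1})$, $(p^n-p^{n-1})(q^m-q^{m-1})$, and $(p^n-p^{n-1})(q^m-q^{m-1})$. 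Substituting these values into Theorem~\ref{isomorfisCl2Zpnqn} gives the four claimed isomorphisms.

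I do not expect a genuine obstacle here, since the structural content sits entirely inside Theorem~\ref{isomorfisCl2Zpnqn}; the work is bookkeeping. The points that need a little care are: (a) justifying the componentwise description of $U'$ for the product ring; (b) remembering to treat the two small cases $p=2,\ n\in\{1,2\}$ separately, as they lie outside Lemma~\ref{lemma_U'U''(R)}; and (c) verifying in each case that the resulting subscript is a multiple of the resulting superscript, so that the Shuriken graph $Sh^t_n$ is actually defined — for instance, in the third case $2^{n-1}(q^m-q^{m-1})$ is divisible by $8$ precisely because $n\geq 3$ and $q^m-q^{m-1}$ is even.
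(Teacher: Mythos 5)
Your proposal is correct and follows essentially the same route as the paper: both deduce the result from Theorem~\ref{isomorfisCl2Zpnqn} and then evaluate $|U'(\mathbb{Z}_{p^nq^m})|=|U'(\mathbb{Z}_{p^n})|\cdot|U'(\mathbb{Z}_{q^m})|$ case by case via Lemma~\ref{lemma_U'U''(R)}. Your added remarks on the componentwise description of $U'$ under the Chinese Remainder decomposition and on checking that the subscript is a multiple of the superscript are careful touches the paper leaves implicit, but they do not change the argument.
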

\begin{proof}
    From Theorem \ref{isomorfisCl2Zpnqn}, we know that
    $$Cl_2(\mathbb{Z}_{p^nq^m}) \cong Sh^{|U'(\mathbb{Z}_{p^nq^m})|}_{(p^n-p^{n-1})(q^m-q^{m-1})}.$$
    Based on Lemma \ref{lemma_U'U''(R)}, the following four cases are considered.
    \begin{enumerate}
        \item Case $p^n=2$. So, $q \neq 2$. We have $|U(\mathbb{Z}_{p^nq^m})|=q^m-q^{m-1}$ and
        \begin{align*}
            \left(|U'(\mathbb{Z}_{p^n})|=1 \text{, }  |U'(\mathbb{Z}_{q^m})|=2 \right) \Rightarrow  |U'(\mathbb{Z}_{p^nq^m})|=2.
        \end{align*}
        
        \item Case $p^n=4$. So, $q \neq 2$. We have $|U(\mathbb{Z}_{p^nq^m})|=2(q^m-q^{m-1})$ and
        \begin{align*}
            \left(|U'(\mathbb{Z}_{p^n})|=2 \text{, }  |U'(\mathbb{Z}_{q^m})|=2\right) \Rightarrow |U'(\mathbb{Z}_{p^nq^m})|=4.
        \end{align*}
        
        \item Case $p=2$ and $n \geq 3$. So, $q \neq 2$. We have
        \begin{align*}
            |U(\mathbb{Z}_{p^nq^m})|&=(p^n-p^{n-1})(q^m-q^{m-1}) \text{ and }\\
            (|U'(\mathbb{Z}_{p^n})|&=4 \text{, } |U'(\mathbb{Z}_{q^m})|=2) \Rightarrow |U'(\mathbb{Z}_{p^nq^m})|=8.
        \end{align*}
       
        \item Case $p,q \neq 2$. We have $|U(\mathbb{Z}_{p^nq^m})|=(p^n-p^{n-1})(q^m-q^{m-1})$ and
        \begin{align*}
            \left(|U'(\mathbb{Z}_{p^n})|=2 \text{, } |U'(\mathbb{Z}_{q^m})|=2\right) \Rightarrow |U'(\mathbb{Z}_{p^nq^m})|=4.
        \end{align*}
    \end{enumerate}
\end{proof}

\subsection{Structure of Clean Graphs over $\mathbb{Z}_{\tiny{p_1^{n_1}p_2^{n_2}p_3^{n_3}}}$, $\mathbb{Z}_{\tiny{p_1^{n_1}p_2^{n_2}p_3^{n_3}p_4^{n_4}}}$, and Generalized Structure of Clean Graph over $\mathbb{Z}_n$}

Before discussing the structure of graphs over $\mathbb{Z}_n$ for any natural number $n$, we introduce a generalization of the shuriken graph, referred to as the shuriken operation on a graph, as defined below.

\begin{definition} Let $G=(V(G), E(G))$ be a graph and let $n, t$ be positive integers such that $n-t$ is even. The $(t,n)$-shuriken graph of $G$, denoted by $Shu^t_n(G)$, is constructed from $G$ by first adding a new vertex ${z}$ and then creating $n$ copies of the resulting graph. Let $G'_i$ for $1 \leq i \leq n$ denote the $i$-th copy of $G$ after the addition of the new vertex. The vertex set and edge set of the graph $Shu^t_n(G)$ are given by:
        $$V(Shu^t_n(G))=\bigcup_{i=1}^n \{z_i, v_i: v \in V(G)\}$$ and 
    \begin{align*}
        E(Shu^t_n(G))= &\{u_iv_j : uv \in E(G), i,j \in \{1,2,\dots,n\}\} \\
        &\cup \{u_iv_i: u_i,v_i \in V(G'_i), u_i\neq v_i, i \in\{1,2,\dots,t\}\}\\ 
        &\cup \Bigg\{u_iv_{n+t+1-i}: u_i \in V(G'_i), v_{n+t+1-i} \in V(G'_{n+t+1-i}) \\
        & \hspace{2.5 cm}i \in \left\{t+1,t+2,\dots,\frac{n+t}{2}\right\} \Bigg\}.
    \end{align*}
\end{definition}

Given graph $P_3$, with $V(P_3)=\{a,b,c\}$. The $(2,4)$-shuriken graph of $P_3$, $Shu^2_4(P_3)$ is presented in Figures \ref{grafShu24P3}.

\begin{figure}[H]
    \begin{center}
        \resizebox{0.55\textwidth}{!}{\begin{tikzpicture}  
				[scale=.9,auto=center,roundnode/.style={circle,fill=blue!40}]
				\node[roundnode] (a1) at (-2,6) {$a_1$};  
				\node[roundnode] (a2) at (0,6)  {$b_1$};  
				\node[roundnode] (a3) at (2,6)  {$c_1$};
				\node[roundnode] (a4) at (1,8) {$z_1$};
				\node[roundnode] (a5) at (2,-2)  {$a_2$};
				\node[roundnode] (a6) at (0,-2) {$b_2$}; 
                \node[roundnode] (a7) at (-2,-2) {$c_2$};  
				\node[roundnode] (a8) at (-1,-4)  {$z_2$};  
				\node[roundnode] (a9) at (4,4)  {$a_3$};
				\node[roundnode] (a10) at (4,2) {$b_3$};
				\node[roundnode] (a11) at (4,0)  {$c_3$};
				\node[roundnode] (a12) at (6,1) {$z_4$}; 
                \node[roundnode] (a13) at (-4,0) {$a_4$};  
				\node[roundnode] (a14) at (-4,2)  {$b_4$};  
				\node[roundnode] (a15) at (-4,4)  {$c_4$};
				\node[roundnode] (a16) at (-6,3) {$z_3$};

				\draw (a1) -- (a2);  
				\draw (a2) -- (a3);
                \draw (a1) -- (a4);  
				\draw (a2) -- (a4);  
				\draw (a3) -- (a4);%
                \draw[bend left=30] (a1) to (a3);
                \draw (a2) -- (a5);  
				\draw (a2) -- (a7);
                \draw (a2) -- (a9);  
				\draw (a2) -- (a11);
                \draw (a2) -- (a13);
                \draw (a2) -- (a15);%%
				\draw (a5) -- (a6);
				\draw (a6) -- (a7); 
                \draw (a5) -- (a8);  
				\draw (a6) -- (a8);  
				\draw (a7) -- (a8);%
                \draw[bend left=30] (a5) to (a7);
                \draw (a6) -- (a1);  
				\draw (a6) -- (a3);
                \draw (a6) -- (a9);  
				\draw (a6) -- (a11);
                \draw (a6) -- (a13);
                \draw (a6) -- (a15);%%
                \draw (a9) -- (a10);  
				\draw (a10) -- (a11); 
                \draw (a9) -- (a12);  
				\draw (a10) -- (a12);  
				\draw (a11) -- (a12);%
                \draw (a10) -- (a5);  
				\draw (a10) -- (a7);
                \draw (a10) -- (a1);  
				\draw (a10) -- (a3);
                \draw (a10) -- (a13);
                \draw (a10) -- (a15);%%
				\draw (a13) -- (a14);
				\draw (a14) -- (a15); 
                \draw (a13) -- (a16);  
				\draw (a14) -- (a16);  
				\draw (a15) -- (a16);%
                \draw (a14) -- (a5);  
				\draw (a14) -- (a7);
                \draw (a14) -- (a9);  
				\draw (a14) -- (a11);
                \draw (a14) -- (a1);
                \draw (a14) -- (a3);%%
                \draw (a9) -- (a13);
				\draw (a9) -- (a14); 
                \draw (a9) -- (a15);  
				\draw (a10) -- (a13);  
				\draw (a10) -- (a14);
                \draw (a10) -- (a15);  
				\draw (a11) -- (a13);
                \draw (a11) -- (a14);  
				\draw (a11) -- (a15);
                \draw (a12) -- (a16);%%
			\end{tikzpicture}}\caption{Graph $Shu^2_{4}(P_3)$}\label{grafShu24P3}
		\end{center}
\end{figure}

We now present a discussion on the properties of the $(t,n)$-shuriken graph for any given graph $G$, as presented in the following theorems.

\begin{theorem}
    Given any graph $G$ and positive integers $n \geq t \geq 2$ such that $n - t$ is even. The $(t, n)$-shuriken graph of $G$ is disconnected if and only if the graph $G$ is a null graph.
\end{theorem}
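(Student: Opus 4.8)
The plan is to prove the two implications separately, taking the ``only if'' direction by contraposition. Throughout I write $w_i$ for the copy in $G'_i$ of a vertex $w \in V(G)\cup\{z\}$, so that $V(Shu^t_n(G))$ is naturally indexed by $(V(G)\cup\{z\})\times\{1,\dots,n\}$, and I keep in mind the three edge families of the definition: (1) the ``blow-up'' edges $u_iv_j$ for $uv\in E(G)$ and all $i,j$; (2) the clique edges inside $G'_i$ for $i\le t$; (3) the complete-join edges between $G'_i$ and $G'_{n+t+1-i}$ for $t+1\le i\le \frac{n+t}{2}$.

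First I would dispatch the easy direction: if $G$ is a null graph then $Shu^t_n(G)$ is disconnected. Since $E(G)=\varnothing$, family (1) contributes nothing; family (3) only ever involves indices strictly greater than $t$ (both endpoints of a family-(3) edge have index $>t$, and this range is empty precisely when $n=t$); hence for each $i\le t$ the only edges incident to vertices of $G'_i$ are the family-(2) clique edges internal to $G'_i$. Therefore each $G'_i$ with $i\le t$ is itself a connected component, and since $t\ge 2$ there are at least two of them, so the graph is disconnected. The degenerate subcase $V(G)=\varnothing$ is identical, the relevant components being the isolated vertices $z_1,\dots,z_t$.

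For the substantive direction I would show that if $G$ has an edge then $Shu^t_n(G)$ is connected. Fix an edge $\{u,v\}\in E(G)$ and take the ``hub'' $C:=\{u_i:1\le i\le n\}\cup\{v_i:1\le i\le n\}$; family (1) makes $u_i$ adjacent to $v_j$ for all $i,j$, so $C$ spans a connected subgraph. It then remains to attach every other vertex $w_i$ to $C$: if $i\le t$, the family-(2) clique on $G'_i$ gives $w_i\sim u_i\in C$; if $i>t$, let $j=n+t+1-i$, which again lies in $\{t+1,\dots,n\}$, and family (3) joins $G'_i$ and $G'_j$ completely, so $w_i\sim u_j\in C$. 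Hence every vertex lies in the component of $C$ and the graph is connected.

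The step I expect to need the most care is the index bookkeeping for the involution $i\mapsto n+t+1-i$ governing family (3): I must verify that it maps $\{t+1,\dots,n\}$ to itself and is fixed-point-free (this is exactly where the hypothesis that $n-t$ is even is used, to keep the pairing well defined), and that the complete join between $G'_i$ and $G'_j$ is available regardless of which of $i,j$ is the ``small'' index $\le\frac{n+t}{2}$ recorded in the definition. A secondary point worth spelling out is that the adjoined vertices $z_i$ receive no family-(1) edges at all (because $z\notin V(G)$), so their only route to the hub $C$ is through the clique edges when $i\le t$ or the pairing edges when $i>t$ --- precisely the two cases treated above --- and that $t\ge 2$ is exactly what guarantees at least two components in the null case.
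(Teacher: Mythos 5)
Your proof is correct and follows essentially the same strategy as the paper's: the null case isolates the cliques on $G'_1$ and $G'_2$ (using $t \ge 2$), and the non-null case routes every vertex through the copies of the endpoints of a fixed edge $uv \in E(G)$. The only cosmetic difference is that you organize the connectivity argument around a connected ``hub'' $\{u_i, v_i : 1 \le i \le n\}$, whereas the paper exhibits explicit length-$\le 3$ paths between arbitrary pairs of vertices by the same routing.
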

\begin{proof}
    $(\Longleftarrow)$ Given a null graph $G$. Let $|V(G)|=k$.  %Diperhatikan dua kasus berikut ini: \begin{enumerate}
        %\item Kasus $t=n$. Diperoleh
        %\begin{align*}
        %     E(Shu^t_n(G))= & \{u_iv_i: u_i,v_i \in V(Shu^t_n(G)), u_i\neq v_i, i \in\{1,2,\dots,n\}\}.
        % \end{align*}
        % Jadi, $Shu^t_n(G) \cong n K_{k+1}$. Dengan kata lain, $Shu^t_n(G)$ merupakan graf tidak terhubung.
        We have
        \begin{align*}
            E(Shu^t_n(G))= & \{u_iv_i: u_i,v_i \in V(Shu^t_n(G)), u_i\neq v_i, i \in\{1,2,\dots,n\}\}\\
            &\cup \Bigg\{u_iv_{n+t+1-i}: u_i,v_{n+t+1-i} \in V(Shu^t_n(G)) \\
        & \hspace{2.5 cm}i \in \left\{t+1,t+2,\dots,\frac{n+t}{2}\right\} \Bigg\}.
        \end{align*}

        There exists sets of vertices $X_1=\{z_1,v_1: v \in V(G)\}$ and $X_2=\{z_2,v_2: v \in V(G)\} \subseteq V(Shu^t_n(G))$, such that no two points in $X_1$ and $X_2$ are connected to each other. \\
    %\end{enumerate}
    $(\Longrightarrow)$ Let $G$ be a graph that is not null. This means that there exists an edge $uv \in E(G)$. Let $x_i, y_j \in V(Shu^t_n(G))$ be arbitrary, with $i, j \in {1, 2, \dots, n}$. This implies that $x_i \in V(G'_i)$ and $y_j \in V(G'_j)$. The following cases are considered:
    \begin{enumerate}
        \item Case $x_i \neq u_i$ and $y_j \neq v_j$. Several paths are available as follows:
        \begin{align*}
            &x_i-u_i-v_j-y_j, & \text{ if } 1 \leq i,j \leq t,\\
            &x_i-u_i-v_{n+t+1-j}-y_j, & \text{ if } 1 \leq i \leq t \text{ and } t+1 \leq j \leq n,\\
            &x_i-u_{n+t+1-i}-v_j-y_j, & \text{ if } t+1 \leq i \leq n \text{ and } 1 \leq j \leq t,\\
            &x_i-u_{n+t+1-i}-v_{n+t+1-j}-y_j, & \text{ if } t+1 \leq i,j \leq n.
        \end{align*}
        \item Case $x_i = u_i$ and $y_j \neq v_j$. The following paths are obtained:
        \begin{align*}
            &x_i-v_j-y_j, & \text{ if } 1 \leq i,j \leq t,\\
            &x_i-v_{n+t+1-j}-y_j, & \text{ if } 1 \leq i \leq t \text{ and } t+1 \leq j \leq n,\\
            &x_i-v_j-y_j, & \text{ if } t+1 \leq i \leq n \text{ and } 1 \leq j \leq t,\\
            &x_i-v_{n+t+1-j}-y_j, & \text{ if } t+1 \leq i,j \leq n.
        \end{align*}
        \item Case $x_i \neq u_i$ and $y_j = v_j$. The following paths are obtained:
        \begin{align*}
            &x_i-u_i-y_j, & \text{ if } 1 \leq i,j \leq t,\\
            &x_i-u_i-y_j, & \text{ if } 1 \leq i \leq t \text{ and } t+1 \leq j \leq n,\\
            &x_i-u_{n+t+1-i}-y_j, & \text{ if } t+1 \leq i \leq n \text{ and } 1 \leq j \leq t,\\
            &x_i-u_{n+t+1-i}-y_j, & \text{ if } t+1 \leq i,j \leq n.
        \end{align*}
        \item If $x_i = u_i$ and $y_j = v_j$, then $x_iy_j \in E(Shu^t_n(G))$.
    \end{enumerate}
\end{proof}

\begin{proposition}\label{propShtnkeIR}
    Given any connected graphs $G_1$ and $G_2$, and positive integers $t$ and $n$, with $2 \leq t < n$ and $n - t$ even. The following holds:
    $$Shu^t_n(G_1) \cong Shu^t_n(G_2) \iff G_1 \cong G_2.$$
\end{proposition}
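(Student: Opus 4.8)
The plan is to prove the two implications separately; $(\Leftarrow)$ is routine, while $(\Rightarrow)$ contains the real content. For $(\Leftarrow)$, given an isomorphism $\phi\colon G_1\to G_2$, one transports it copy-by-copy: define $\Phi\colon Shu^t_n(G_1)\to Shu^t_n(G_2)$ by $\Phi(v_i)=\phi(v)_i$ for $v\in V(G_1)$ and $\Phi(z_i)=z_i$ for each $i$. Using only that $\phi$ preserves and reflects adjacency in $G_1$, one verifies that $\Phi$ respects each of the three edge families of $Shu^t_n$ — the blow-up edges $u_iv_j$ with $uv\in E(G)$, the intra-copy clique edges for $i\le t$, and the folding edges between $G'_i$ and $G'_{n+t+1-i}$ — so $\Phi$ is an isomorphism.

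For $(\Rightarrow)$, let $\Phi\colon Shu^t_n(G_1)\to Shu^t_n(G_2)$ be an isomorphism and set $k_j=|V(G_j)|$. Counting vertices gives $n(k_1+1)=n(k_2+1)$, so $k_1=k_2=:k$; if $k=1$ then both $G_j=K_1$ and we are done, so assume $k\ge 2$. The crux is to locate, in a way forced by $\Phi$, a vertex subset of $Shu^t_n(G)$ whose induced subgraph is a copy of $G$. Writing $d_v=\deg_G(v)$, a direct degree count in $Shu^t_n(G)$ yields $\deg(z_i)=k$ for $i\le t$, $\deg(z_i)=k+1$ for $i>t$, $\deg(v_i)=k+d_v(n-1)$ for $i\le t$, and $\deg(v_i)=k+1+d_v(n-1)$ for $i>t$. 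Since $G$ is connected with $k\ge 2$ we have $d_v\ge 1$, and since $2\le t<n$ with $n-t$ even we have $n\ge 4$; hence every non-$z$ vertex has degree at least $k+n-1>k+1$. Thus $Z:=\{\text{vertices of degree }\le k+1\}$ equals $\{z_1,\dots,z_n\}$, its degree-$k$ members are $z_1,\dots,z_t$ and its degree-$(k+1)$ members are $z_{t+1},\dots,z_n$, and $\Phi$ preserves all of this.

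Next, among the degree-$(k+1)$ vertices one has $z_i\sim z_j$ (for $i,j>t$) precisely when $j=n+t+1-i$, so these vertices induce a perfect matching $M$ of $\frac{n-t}{2}$ edges, again preserved by $\Phi$. Fix an edge $\{z_i,z_{i'}\}$ of $M$, where $i'=n+t+1-i>t$. Since $z_i$ has no blow-up and no intra-copy edges, $N(z_i)=V(G'_{i'})$, so $S:=N(z_i)\setminus Z=\{v_{i'}:v\in V(G)\}$. Because $i'>t$, no intra-copy clique edge occurs inside $S$ and every folding edge meeting $S$ leaves $S$; hence the only edges of $Shu^t_n(G)$ inside $S$ are the blow-up edges $v_{i'}u_{i'}$ with $uv\in E(G)$, i.e.\ $Shu^t_n(G)[S]\cong G$. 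Now $\Phi$ sends $z_i$ to a degree-$(k+1)$ vertex of the corresponding set in $Shu^t_n(G_2)$, sends $z_{i'}$ to its partner under the corresponding matching, and therefore maps $S_1=N(z_i)\setminus Z$ onto the analogous set $S_2$ in $Shu^t_n(G_2)$; since $\Phi$ restricts to an isomorphism $Shu^t_n(G_1)[S_1]\to Shu^t_n(G_2)[S_2]$, we get $G_1\cong Shu^t_n(G_1)[S_1]\cong Shu^t_n(G_2)[S_2]\cong G_2$.

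The step I expect to be the main obstacle is precisely this canonical localization of a clean copy of $G$. The copies $G'_i$ with $i\le t$ have been completed to cliques and carry no information about $G$, so one is forced to work inside a copy indexed by $i>t$; the degree count is what isolates the added vertices $z_i$, and the folding matching on the degree-$(k+1)$ ones is what makes ``a copy indexed by $i>t$'' an isomorphism-invariant notion. Everything after that — pushing $S$ through $\Phi$ and reading off the edges of $G$ — is routine bookkeeping.
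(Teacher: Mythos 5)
Your proof is correct and follows essentially the same route as the paper's: count vertices to get $|V(G_1)|=|V(G_2)|$, use degrees to show the isomorphism fixes the set of added vertices $z_i$ (split by degree $k$ versus $k{+}1$), then use the neighborhood $N(z_i)=V(G'_{n+t+1-i})$ for $i>t$ to locate a canonically-placed induced copy of $G$ that the isomorphism must carry to the corresponding copy on the other side. Your write-up is in fact somewhat more careful than the paper's (correct degree formula $(n-1)d_v+k$ rather than $nd_v+k$, explicit handling of the $k=1$ case, and an explicit check that the copy $S$ indexed by $i'>t$ carries no extra clique or folding edges).
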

\begin{proof}
    Since $Shu^t_n(G_1) \cong Shu^t_n(G_2)$, we have $$|V(Shu^t_n(G))| = |V(Shu^t_n(H))| \text{ and } |E(Shu^t_n(G))| = |E(Shu^t_n(H))|.$$ So, $|V(G_1)|=|V(G_2)|=k$. Let $i,j \in\{t+1,t+2,\dots,n\}$ be arbitrary. Let $$X_1 = \{u_i: u \in V(G_1)\}$$ and $$X_2=\{v_j: v \in V(G_2)\}.$$ 
    The following bijective functions can be constructed. $$\psi: V(\langle X_1 \rangle _{Shu^t_n(G_1)}) \to V(G_1) \text{ and } \phi: V(\langle X_2 \rangle _{Shu^t_n(G_2})) \to V(G_2),$$ with $\psi(u_i)=u$ for each $u_i \in V(\langle X_1 \rangle _{Shu^t_n(G_1)})$ and $\phi(v_j)=v$ for any $v_j \in V(\langle X_2 \rangle _{Shu^t_n(G_2)})$.
    Let $x_i,y_i \in X_1$ and $u_j,v_j \in X_2$, such that $x_iy_i \in E(\langle X_1 \rangle _{Shu^t_n(G_1)})$ and $u_jv_j \in E(\langle X_2 \rangle _{Shu^t_n(G_2)})$. We have $x,y \in V(G_1)$ and $u,v \in V(G_2)$, and it also holds that $x_iy_i \in E(Shu^t_n(G_1)$ and $u_jv_j \in E(Shu^t_n(G_2)$. Since $i,j \in \{t+1,t+2,\dots,n\}$, it must be $\psi(x_i)\psi(y_i)=xy \in E(G_1)$ and $\phi(u_j)\phi(v_j)=uv \in E(G_2)$. Conversely, let $x_i,y_i \in X_1$ and $u_j,v_j \in X_2$, such that $xy \in E(G_1)$ and $uv \in E(G_2)$. Based on the definition of adjacency in the graphs $Shu^t_n(G_1)$ and $Shu^t_n(G_2)$, we obtain $x_iy_i \in E(Shu^t_n(G_1)$ and $u_jv_j \in E(Shu^t_n(G_2)$. In other word, $x_iy_i \in E(\langle X_1 \rangle _{Shu^t_n(G_1)})$ and $u_jv_j \in E(\langle X_2 \rangle _{Shu^t_n(G_2)})$
    Hence, 
    $$\langle X_1 \rangle _{Shu^t_n(G_1)} \cong G_1 \text{ and } \langle X_2 \rangle _{Shu^t_n(G_2)} \cong G_2.$$ 
    
    Since $Shu^t_n(G_1) \cong Shu^t_n(G_2)$, there exists a graph isomorphism 
    $$f: Shu^t_n(G_1) \to Shu^t_n(G_2).$$
    Observe that:
    \begin{align*}
        deg_{Shu^t_n(G_1)}(z_i)&=deg_{Shu^t_n(G_2)}(z_i)=k, \text{ for } 1 \leq i \leq t,\\
        deg_{Shu^t_n(G_1)}(z_i)&=deg_{Shu^t_n(G_2)}(z_i)=k+1, \text{ for } t+1 \leq i \leq n,\\
        deg_{Shu^t_n(G_1)}(u_i)&=n (deg_{G_1}(u)) + k, \text{ for } u \in V(G_1) \text{ and } 1 \leq i \leq t,\\
        deg_{Shu^t_n(G_1)}(u_i)&=n (deg_{G_1}(v)) + k+1, \text{ for } u \in V(G_1) \text{ and } t+1 \leq i \leq n,\\
        deg_{Shu^t_n(G_2)}(v_i)&=n (deg_{G_2}(v)) + k, \text{ for } v \in V(G_2) \text{ and } 1 \leq i \leq t,\\
        deg_{Shu^t_n(G_2)}(v_i)&=n (deg_{G_2}(v)) + k+1, \text{ for } v \in V(G_2) \text{ and } t+1 \leq i \leq n,
    \end{align*}
    therefore $$f(\{z_i: i=1,2,\dots,t\})=\{z_i: i=1,2,\dots,t\}$$ and $$f(\{z_i: i=t+1,t+2,\dots,n\})=\{z_i: i=t+1,t+2,\dots,n\}.$$
    Furthermore,
    \begin{align*}
        N_{Shu^t_n(G_1)}(z_i)&=\{u_i: u \in V(G_1)\}, \text{ for } 1 \leq i \leq t,\\
        N_{Shu^t_n(G_1)}(z_a)&=\{z_{n+t+1-a}, u_{n+t+1-a}: u \in V(G_1)\}, \text{ for } t+1 \leq a \leq n,\\
        N_{Shu^t_n(G_2)}(z_j)&=\{v_j: v \in V(G_2)\}, \text{ for } 1 \leq j \leq t,\\
        N_{Shu^t_n(G_2)}(z_b)&=\{z_{n+t+1-b}, v_{n+t+1-b}: v \in V(G_2)\}, \text{ for } t+1 \leq b \leq n.
    \end{align*}
    Consider $u_n \in V(Shu^t_n(G_1))$ for any $u \in V(G_1)$. Since $u_nz_{t+1} \in E(Shu^t_n(G_1))$, we have $f(u_n)f(z_{t+1}) \in E(Shu^t_n(G_2))$. Assume that $f(u_n)=v_m$ and $f(z_{t+1})=z_{n+t+1-m}$ with $t+1 \leq m \leq n$.
    Consequently, $$f(\{u_n: u \in V(G_1)\})=\{v_m: v \in V(G_2)\}.$$ Let $X_1=\{u_n: u \in V(G_1)\}$ and $X_2=\{v_m: v \in V(G_2)\}$. As $f$ is a graph isomorphism, it follows that $\langle X_1 \rangle_{Shu^t_n(G_1)} \cong \langle X_2 \rangle_{Shu^t_n(G_2)}$. 
    Hence,
    $$G_1 \cong \langle X_1 \rangle_{Shu^t_n(G_1)} \cong \langle X_2 \rangle_{Shu^t_n(G_2)} \cong G_2.$$
    For the converse, if $G_1 \cong G_2$, it follows trivially that $Shu^t_n(G_1) \cong Shu^t_n(G_2)$.
\end{proof}

\begin{theorem}\label{teo_isoZp1p2p3}
    For any distinct prime numbers $p_1,p_2,p_3$ and natural numbers $n_1,n_2,n_3$, the following holds
    $$Cl_2(\mathbb{Z}_{\tiny{p_1^{n_1}p_2^{n_2}p_3^{n_3}}}) \cong Shu^{|U'(\mathbb{Z}_{\tiny{p_1^{n_1}p_2^{n_2}p_3^{n_3}}})|}_{|U(\mathbb{Z}_{\tiny{p_1^{n_1}p_2^{n_2}p_3^{n_3}}})|}(G_1),$$
    where the structure of the graph $G_1$ is shown in the Figure \ref{grafg1}:
    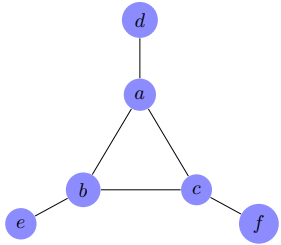
\begin{figure}[H]
		\begin{center} 
            \resizebox{0.3\textwidth}{!}{\begin{tikzpicture}
			[scale=1,auto=center,every node/.style={circle, fill=blue!45}] 
			\node (v1) at (0,0.68) {$a$};
			\node (v2) at (-1,-1) {$b$};
			\node (v3) at (1,-1) {$c$};
                \node (v4) at (0,2) {$d$};
                \node (v5) at (-2.1,-1.6) {$e$};
			\node (v6) at (2.1,-1.6) {$f$};

			\draw (v1) -- (v2);
                \draw (v2) -- (v3);
                \draw (v1) -- (v3);
                \draw (v1) -- (v4);
                \draw (v2) -- (v5);
                \draw (v3) -- (v6);
            \end{tikzpicture}}
            \end{center}\label{grafg1}\caption{Graph $G_1$}
	\end{figure}
\end{theorem}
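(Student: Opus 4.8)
The plan is to follow the template of the proof of Theorem~\ref{isomorfisCl2Zpnqn}: first identify the base graph $G_1$ inside the idempotent structure of $R$, then identify the vertex adjoined by the $Shu$-operation with the idempotent $1$, and finally exhibit an explicit vertex bijection and check that it preserves adjacency in both directions. Write $R=\mathbb{Z}_{p_1^{n_1}}\times\mathbb{Z}_{p_2^{n_2}}\times\mathbb{Z}_{p_3^{n_3}}$, so that $Id(R)=\{(\varepsilon_1,\varepsilon_2,\varepsilon_3):\varepsilon_i\in\{0,1\}\}$, $U(R)=\prod_{i=1}^{3}U(\mathbb{Z}_{p_i^{n_i}})$, and $U'(R)=\prod_{i=1}^{3}U'(\mathbb{Z}_{p_i^{n_i}})$ (the last equality because $u^2=1$ holds coordinatewise); put $t:=|U'(R)|$ and $N:=|U(R)|$. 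The seven nonzero idempotents of $R$ are the $(\varepsilon_1,\varepsilon_2,\varepsilon_3)\neq(0,0,0)$. Labelling the six non-trivial ones by $a=(1,0,0)$, $b=(0,1,0)$, $c=(0,0,1)$, $d=(0,1,1)$, $e=(1,0,1)$, $f=(1,1,0)$, one checks that the pairs $w,w'$ with $ww'=w'w=0$ are exactly $\{a,b\},\{b,c\},\{c,a\},\{a,d\},\{b,e\},\{c,f\}$, that is, the edges of the graph $G_1$ in Figure~\ref{grafg1}; in other words $G_1$ is exactly the idempotent graph $I(R)$. The seventh nonzero idempotent $1=(1,1,1)$ satisfies $1\cdot w=w\neq 0$ for every nonzero $w$, so it is isolated with respect to this relation, and it will play the role of the vertex $z$ adjoined in $Shu^t_N(\cdot)$; note $|V(Cl_2(R))|=7N=N\bigl(|V(G_1)|+1\bigr)=|V(Shu^t_N(G_1))|$.

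Next I would set up the bijection. Enumerate $U(R)=\{x_1,\dots,x_N\}$ so that $U'(R)=\{x_1,\dots,x_t\}$, $U''(R)=\{x_{t+1},\dots,x_N\}$, and $x_i\,x_{N+t+1-i}=1$ for every $i\in\{t+1,\dots,N\}$; this arrangement exists because $u\mapsto u^{-1}$ is a fixed-point-free involution on $U''(R)$ and $|U''(R)|=N-t$ is even (by Lemma~\ref{lemma_U'U''(R)} and the product formula for $|U(R)|$, since at most one of the three prime-power factors equals $\mathbb{Z}_2$). Writing $w_i$ and $z_i$ for the copies of $w\in\{a,\dots,f\}$ and of $z$ sitting in the $i$-th copy of $G_1\cup\{z\}$ used by $Shu^t_N(G_1)$, define $g:V(Cl_2(R))\to V(Shu^t_N(G_1))$ by $g((w,x_i))=w_i$ for $w$ a non-trivial idempotent and $g((1,x_i))=z_i$. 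Since $g$ restricts to a bijection on each of the $N$ copies, it is a bijection.

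Then I would verify that $g$ is a graph isomorphism by the same case analysis as in Theorem~\ref{isomorfisCl2Zpnqn}. Let $(w,x_i),(w',x_j)$ be distinct vertices of $Cl_2(R)$; they are adjacent iff $ww'=0$ or $x_ix_j=1$. In the forward direction: if $ww'=0$ then $w,w'$ are non-trivial and $ww'=0$ says exactly that $ww'\in E(G_1)$, so $g((w,x_i))g((w',x_j))=w_iw'_j$ lies in the first edge-class of $Shu^t_N(G_1)$; if $x_ix_j=1$ then either $i=j\leq t$ and $w\neq w'$, in which case the $i$-th copy being complete ($i\leq t$) puts $w_iw'_i$ in the second edge-class, or $x_i,x_j$ form an inverse pair, which by the chosen enumeration forces $j=N+t+1-i$ with $i\in\{t+1,\dots,N\}$, so $w_i$ and $w'_j$ lie in the two copies joined by the third edge-class. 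Conversely, if $g((w,x_i))g((w',x_j))\in E(Shu^t_N(G_1))$, then a first-class edge forces $ww'\in E(G_1)$, hence $ww'=w'w=0$; a second-class edge forces $i=j\leq t$, hence $x_i^2=1$, and $w\neq w'$; a third-class edge forces $\{i,j\}$ to be one of the inverse-pair index sets, hence $x_ix_j=1$; in each case $(w,x_i)$ and $(w',x_j)$ are adjacent in $Cl_2(R)$, where one uses that $x_ix_j=1$ can occur only in those two ways because inverses are unique. This would complete the proof that $Cl_2(R)\cong Shu^t_N(G_1)$.

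The step I expect to be the main obstacle is the bookkeeping that aligns the inverse-pairing of $U''(R)$ with the index-pairing $i\leftrightarrow N+t+1-i$ hard-wired into the $Shu$-operation, together with the parity argument that $|U''(R)|$ is even so that this pairing is a perfect matching. Once that is pinned down, the rest is a coordinatewise repetition of the $Cl_2(\mathbb{Z}_{p^nq^m})$ computation, with the two-vertex path replaced by $G_1$ and a third idempotent coordinate simply carried along for the ride.
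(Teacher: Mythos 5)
Your proposal is correct and follows essentially the same route as the paper: identify the six non-trivial idempotents of $\mathbb{Z}_{p_1^{n_1}}\times\mathbb{Z}_{p_2^{n_2}}\times\mathbb{Z}_{p_3^{n_3}}$ with the vertices of $G_1$ (your labelling is a harmless permutation of the paper's), let $(1,1,1)$ play the role of the adjoined vertex $z$, order the units with $U'$ first and $U''$ paired by inverses via $i\leftrightarrow N+t+1-i$, and verify adjacency in both directions by the same two-case analysis. The only additions are points the paper leaves implicit, namely that $G_1\cong I(R)$ and that inversion is a fixed-point-free involution on $U''(R)$, so the pairing is a perfect matching; both are correct and only strengthen the write-up.
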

\begin{proof}
    We know that
    \begin{align*}
        V(Cl_2(\mathbb{Z}_{\tiny{p_1^{n_1}p_2^{n_2}p_3^{n_3}}}))&=\{(0,0,1),(0,1,0),(1,0,0),(0,1,1),(1,0,1),(1,1,0),\\
        &\hspace{0.6cm} (1,1,1)\}\times \left(U'(\mathbb{Z}_{\tiny{p_1^{n_1}p_2^{n_2}p_3^{n_3}}}) \cup U''(\mathbb{Z}_{\tiny{p_1^{n_1}p_2^{n_2}p_3^{n_3}}})\right).
    \end{align*}
    Assume that $t=|U'(\mathbb{Z}_{\tiny{p_1^{n_1}p_2^{n_2}p_3^{n_3}}})|$ and $k=|U(\mathbb{Z}_{\tiny{p_1^{n_1}p_2^{n_2}p_3^{n_3}}})|$.
    Let $$U(\mathbb{Z}_{\tiny{p_1^{n_1}p_2^{n_2}p_3^{n_3}}})=\{u_1,u_2,\dots,u_k\},$$ with $u_1,u_2,\dots,u_t \in U'(\mathbb{Z}_{\tiny{p_1^{n_1}p_2^{n_2}p_3^{n_3}}}))$ and $u_{t+1},u_{t+2},\dots,u_{k} \in U''(\mathbb{Z}_{\tiny{p_1^{n_1}p_2^{n_2}p_3^{n_3}}})$, such that 
    $$u_{t+i}u_{k+1-i} \text{, for all } 1 \leq i \leq k-t.$$ Construct a bijective function $f:V(Cl_2(\mathbb{Z}_{\tiny{p_1^{n_1}p_2^{n_2}p_3^{n_3}}})) \to V(Shu^t_k(G_1))$, where for each $i \in \{1,2,\dots,k\}$\\
    \begin{tabular}{ccc}
    $f((0,0,1),u_i)=a_i$, & $f((0,1,0),u_i)=b_i$, & $f((1,0,0),u_i)=c_i$,\\
    $f((0,1,1),u_i)=f_i$, & $f((1,0,1),u_i)=e_i$, & $f((1,1,0),u_i)=d_i$,\\
    $f((1,1,1),u_i)=z_i$.
    \end{tabular}
    
    Let $w_1=(x,u_i)$, $w_2=(y,u_j)$ be arbitrary vertices of graph $Cl_2(\mathbb{Z}_{\tiny{p_1^{n_1}p_2^{n_2}p_3^{n_3}}})$. If $w_1w_2=(x,u_i)(y,u_j) \in E(Cl_2(\mathbb{Z}_{\tiny{p_1^{n_1}p_2^{n_2}p_3^{n_3}}}))$, then $xy=0$ or $u_iu_j=1$. Two cases are considered: \begin{enumerate}
        \item Case 1: $xy=0$. By examining its idempotent elements that form zero divisors with each other, we obtain 
        \begin{align*}
            \{x,y\}&\in\{\{(0,0,1),(0,1,0)\}, \{(0,0,1),(1,0,0)\},\{(0,1,0),(1,0,0)\}\\
            &\hspace{0.7cm}\{(0,0,1),(1,1,0)\},\{(0,1,0),(1,0,1)\},\{(1,0,0),(0,1,1)\}\}.
        \end{align*}
        Consequently
        \begin{align*}
            \{f(w_1),f(w_2)\}&\in\{\{a_i,b_j\}, \{a_i,c_j\},\{b_i,c_j\},\{a_i,d_j\},\{b_i,e_j\},\{c_i,f_j\}\}.
        \end{align*}
        From the adjacency properties of the graph $G_1$, it can be concluded that $f(w_1)f(w_2) \in E(Shu^t_k(G_1))$.
        \item Case 2: $u_iu_j=1$. Based on the assumption regarding the elements of $U(Cl)$ stated earlier, it must be $i=j$ for $1 \leq i \leq t$, or $j=k+t+1-i$ for $t+1 \leq i \leq k$. Hence, $f(w_1)f(w_2) \in E(Shu^t_k(G_1))$.
    \end{enumerate}
    In contrast, let $f(w_1) = p_i$, $f(w_2) = q_j$. If $f(w_1)f(w_2) = p_i q_j \in E(Shu^t_k(G_1))$, the following possibilities emerge:
    \begin{enumerate} 
    \item If $pq \in E(G_1)$, then using the same method as in Case 1 above, we obtain $w_1 w_2 \in E(Cl_2(\mathbb{Z}_{\tiny{p_1^{n_1}p_2^{n_2}p_3^{n_3}}}))$. 
    \item If $i = j$ with $1 < i < t$ or $j = k + t + 1 - i$ with $t + 1 < i < k$, then by following the reasoning in Case 2 above, we obtain $w_1 w_2 \in E(Cl_2(\mathbb{Z}_{\tiny{p_1^{n_1}p_2^{n_2}p_3^{n_3}}}))$. \end{enumerate}
\end{proof}

From Theorem \ref{teo_isoZp1p2p3}, the following proposition can be directly derived.

\begin{proposition}\label{propisoZp1p2p3}
    For any distinct prime numbers $p_1,p_2,p_3$ and natural numbers $n_1,n_2,n_3$, the following holds
    $$Cl_2(\mathbb{Z}_{p_1^{n_1}p_2^{n_2}p_3^{n_3}}) \cong \begin{cases}
        Shu^4_{\left(p_2^{n_2}-p_2^{n_2-1}\right)\left(p_3^{n_3}-p_3^{n_3-1}\right)}(G_1), & \text{if } p_1^{n_1}=2,\\
        Shu^8_{2\left(p_2^{n_2}-p_2^{n_2-1}\right)\left(p_3^{n_3}-p_3^{n_3-1}\right)}(G_1), & \text{if } {p_1}^{n_1}=4,\\
        Shu^{16}_{m}(G_1), & \text{if } p_1=2, n_1 \geq 3,\\
        Shu^8_{m}(G_1), & \text{if } p_1,p_2,p_3 \neq 2,
    \end{cases}$$
    where $m=\left(p_1^{n_1}-p_1^{n_1-1}\right)\left(p_2^{n_2}-p_2^{n_2-1}\right)\left(p_3^{n_3}-p_3^{n_3-1}\right)$.
\end{proposition}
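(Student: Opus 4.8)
The plan is to obtain the proposition as an immediate specialization of Theorem~\ref{teo_isoZp1p2p3}, which already establishes
$$Cl_2(\mathbb{Z}_{p_1^{n_1}p_2^{n_2}p_3^{n_3}}) \cong Shu^{t}_{k}(G_1), \qquad \text{where } t=|U'(\mathbb{Z}_{p_1^{n_1}p_2^{n_2}p_3^{n_3}})|,\ k=|U(\mathbb{Z}_{p_1^{n_1}p_2^{n_2}p_3^{n_3}})|.$$
Thus the entire task reduces to computing the pair $(t,k)$ in each of the four listed regimes and then substituting. Since $k$ and $t$ depend only on the multiset $\{p_1^{n_1},p_2^{n_2},p_3^{n_3}\}$, I would first note that we may reorder the primes so that, whenever $2\mid n$, the prime $p_1$ equals $2$; the four cases are then exhaustive.

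Next I would record the two multiplicativity facts that drive the computation. By the Chinese Remainder Theorem one has the ring isomorphism $\mathbb{Z}_{p_1^{n_1}p_2^{n_2}p_3^{n_3}}\cong \mathbb{Z}_{p_1^{n_1}}\times\mathbb{Z}_{p_2^{n_2}}\times\mathbb{Z}_{p_3^{n_3}}$, hence $U(\mathbb{Z}_{p_1^{n_1}p_2^{n_2}p_3^{n_3}})\cong\prod_{i=1}^{3}U(\mathbb{Z}_{p_i^{n_i}})$, giving $k=\prod_{i=1}^{3}\bigl(p_i^{n_i}-p_i^{n_i-1}\bigr)$. Moreover an element of a direct product squares to the identity precisely when each coordinate does, so $U'$ is also multiplicative across the factors: $t=\prod_{i=1}^{3}|U'(\mathbb{Z}_{p_i^{n_i}})|$.

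Then I would evaluate each local factor $|U'(\mathbb{Z}_{p^n})|$ using Lemma~\ref{lemma_U'U''(R)}: for odd $p$ the congruence $a^2\equiv 1$ has exactly the two distinct solutions $1$ and $p^n-1$, so $|U'(\mathbb{Z}_{p^n})|=2$; for $p=2$ a direct check gives $|U'(\mathbb{Z}_2)|=1$ and $|U'(\mathbb{Z}_4)|=2$, while for $n\geq 3$ the lemma supplies the four solutions $1,\,2^{n-1}-1,\,2^{n-1}+1,\,2^n-1$, so $|U'(\mathbb{Z}_{2^n})|=4$. Substituting into the products above yields, in the four cases $p_1^{n_1}=2$, $p_1^{n_1}=4$, $p_1=2$ with $n_1\geq 3$, and $p_1,p_2,p_3\neq 2$: the values $t=4,\,8,\,16,\,8$ respectively (using $|U'|=1,2,4$ for the $p_1$-factor and $|U'|=2$ for the two odd-prime factors), together with $k=(p_2^{n_2}-p_2^{n_2-1})(p_3^{n_3}-p_3^{n_3-1})$, $k=2(p_2^{n_2}-p_2^{n_2-1})(p_3^{n_3}-p_3^{n_3-1})$, $k=m$, and $k=m$. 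Feeding each pair $(t,k)$ into Theorem~\ref{teo_isoZp1p2p3} gives exactly the four claimed isomorphisms.

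Finally, for completeness I would verify that every $Shu^t_k(G_1)$ appearing is well-defined, i.e.\ that $k-t$ is even: in each case at least two of the factors $p_i^{n_i}-p_i^{n_i-1}$ arise from odd primes and are therefore even, so $4\mid k$, while $t\in\{4,8,16\}$, whence $k-t$ is even. There is essentially no obstacle in this argument — all the substance sits in Theorem~\ref{teo_isoZp1p2p3} and Lemma~\ref{lemma_U'U''(R)} — so the only care required is to keep the case split clean and to check that the factored totients match the parenthesized expressions for $k$ written in the statement.
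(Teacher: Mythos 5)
Your proposal is correct and follows essentially the same route as the paper's proof: specialize Theorem~\ref{teo_isoZp1p2p3} and compute $|U|$ and $|U'|$ factorwise over the CRT decomposition via Lemma~\ref{lemma_U'U''(R)}, splitting into the same four cases with the same resulting values $t=4,8,16,8$. Your explicit justification of the multiplicativity of $U'$ and the closing parity check that $k-t$ is even are small additions the paper leaves implicit, but they do not change the argument.
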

\begin{proof}
    Based on Theorem \ref{teo_isoZp1p2p3}, we have
    $$Cl_2(\mathbb{Z}_{p_1^{n_1}p_2^{n_2}p_3^{n_3}}) \cong Shu^{|U'(\mathbb{Z}_{p_1^{n_1}p_2^{n_2}p_3^{n_3}})|}_{|U(\mathbb{Z}_{p_1^{n_1}p_2^{n_2}p_3^{n_3}})|}(G_1).$$
    From Lemma \ref{lemma_U'U''(R)}, we obtain four possibilities.
    \begin{enumerate}
        \item Case $p_1^{n_1}=2$. So, $p_2,p_3 \neq 2$. We get $$|U(\mathbb{Z}_{p_1^{n_1}p_2^{n_2}p_3^{n_3}})|=\left(p_2^{n_2}-p_2^{n_2-1}\right)\left(p_3^{n_3}-p_3^{n_3-1}\right)$$ and
        \begin{align*}
            \left(|U'(\mathbb{Z}_{p_1^{n_1}})|=1 \text{, }  |U'(\mathbb{Z}_{p_2^{n_2}})|=2 \text{, }  |U'(\mathbb{Z}_{p_3^{n_3}})|=2 \right) \Rightarrow  |U'(\mathbb{Z}_{p_1^{n_1}p_2^{n_2}p_3^{n_3}})|=4.
        \end{align*}
        
        \item Case $p_1^{n_1}=4$. Hence, $p_2,p_3 \neq 2$. We get $$|U(\mathbb{Z}_{p_1^{n_1}p_2^{n_2}p_3^{n_3}})|=2\left(p_2^{n_2}-p_2^{n_2-1}\right)\left(p_3^{n_3}-p_3^{n_3-1}\right)$$ and
        \begin{align*}
            \left(|U'(\mathbb{Z}_{p_1^{n_1}})|=2 \text{, }  |U'(\mathbb{Z}_{p_2^{n_2}})|=2 \text{, }  |U'(\mathbb{Z}_{p_3^{n_3}})|=2 \right) \Rightarrow  |U'(\mathbb{Z}_{p_1^{n_1}p_2^{n_2}p_3^{n_3}})|=8.
        \end{align*}
        
        \item Case $p_1=2$ dan $n_1 \geq 3$. It must be $p_2,p_3 \neq 2$. So,
        \begin{align*}
            |U(\mathbb{Z}_{p_1^{n_1}p_2^{n_2}p_3^{n_3}})|=\left(p_1^{n_1}-p_1^{n_1-1}\right)\left(p_2^{n_2}-p_2^{n_2-1}\right)\left(p_3^{n_3}-p_3^{n_3-1}\right) \text{ dan }\\
           \left(|U'(\mathbb{Z}_{p_1^{n_1}})|=4 \text{, }  |U'(\mathbb{Z}_{p_2^{n_2}})|=2 \text{, }  |U'(\mathbb{Z}_{p_3^{n_3}})|=2 \right) \Rightarrow  |U'(\mathbb{Z}_{p_1^{n_1}p_2^{n_2}p_3^{n_3}})|=16.
        \end{align*}
       
        \item Case $p_1,p_2,p_3 \neq 2$. We have 
        \begin{align*}
            |U(\mathbb{Z}_{p_1^{n_1}p_2^{n_2}p_3^{n_3}})|=\left(p_1^{n_1}-p_1^{n_1-1}\right)\left(p_2^{n_2}-p_2^{n_2-1}\right)\left(p_3^{n_3}-p_3^{n_3-1}\right) \text{ dan }\\
           \left(|U'(\mathbb{Z}_{p_1^{n_1}})|=2 \text{, }  |U'(\mathbb{Z}_{p_2^{n_2}})|=2 \text{, }  |U'(\mathbb{Z}_{p_3^{n_3}})|=2 \right) \Rightarrow  |U'(\mathbb{Z}_{p_1^{n_1}p_2^{n_2}p_3^{n_3}})|=8.
        \end{align*}
    \end{enumerate}
\end{proof} 

In the previous discussion on the clean graph $Cl_2(\mathbb{Z}_{\tiny{p_1^{n_1}p_2^{n_2}p_3^{n_3}}})$, we observe the proof of Case 1 in Theorem \ref{teo_isoZp1p2p3}, its structure can be interpreted through the interaction between two nonzero idempotent elements whose products result in zero. More generally, for any ring $R$, the idempotent element $1_R$ clearly does not form a zero product with any other nonzero idempotent and therefore does not contribute to such interactions. Consequently, it is natural to restrict out attention to the nontrivial idempotent elements. This observation is closely related to the concept of the idempotent graph $I(R)$, where edges reflect zero products between pairs of nontrivial idempotent. Motivated by this relation, we now state the following general theorem.

\begin{theorem}\label{teoremCl2Shutn}
    Let $R$ be an arbitrary ring with an identity element. It holds that
    $$Cl_2(R) \cong Shu^{|U'(R)|}_{|U(R)|}(I(R)).$$
    %\textcolor{red}{Notes: $|U''(R)|=|U(R)|-|U'(R)|$ selalu bernilai genap.} 
\end{theorem}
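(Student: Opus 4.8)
The plan is to establish the isomorphism by constructing an explicit bijection between $V(Cl_2(R))$ and $V(Shu^{|U'(R)|}_{|U(R)|}(I(R)))$ that mirrors the construction already used in Theorem~\ref{teo_isoZp1p2p3}, but now with the abstract idempotent graph $I(R)$ playing the role of the concrete graph $G_1$. First I would set $t=|U'(R)|$ and $k=|U(R)|$, enumerate the units as $U(R)=\{u_1,\dots,u_k\}$ so that $u_1,\dots,u_t$ are exactly the elements of $U'(R)$ and the remaining $u_{t+1},\dots,u_k$ of $U''(R)$ are paired up by the rule $u_{t+i}u_{k+1-i}=1$ for $1\le i\le k-t$ (this pairing is well defined because $u\mapsto u^{-1}$ is a fixed-point-free involution on $U''(R)$, forcing $k-t$ to be even, which is exactly the parity hypothesis needed for the shuriken operation to be defined). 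The vertex set of $Cl_2(R)$ is $(Id(R)\setminus\{0\})\times U(R)$; writing $Id(R)\setminus\{0\}=\{e^{(1)},\dots,e^{(r)}\}$ where $r=|Id(R)|-1=|V(I(R))|$, I would define $f((e^{(s)},u_i))$ to be the vertex $v_i$ in the $i$-th copy of $I(R)$ corresponding to $e^{(s)}\in V(I(R))$, and send the vertices $(1_R,u_i)$ — the copies attached to the idempotent $1_R$, which is not a vertex of $I(R)$ — to the added apex vertices $z_i$. This is manifestly a bijection since $|V(Cl_2(R))|=(|Id(R)|-1+1)\cdot|U(R)|=(r+1)k$ matches $|V(Shu^t_k(I(R)))|$.

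Next I would verify that $f$ preserves and reflects adjacency, splitting into the same two cases as before. Two vertices $(e,u_i)$ and $(e',u_j)$ of $Cl_2(R)$ are adjacent iff $ee'=e'e=0$ or $u_iu_j=u_ju_i=1$. In the first case, $ee'=e'e=0$ forces both $e,e'$ to be nonzero (since $0$ is not a vertex) and distinct from $1_R$ (as $1_R\cdot e'=e'\ne 0$ whenever $e'\ne 0$); hence $e,e'\in V(I(R))$ and $ee'=e'e=0$ means precisely $ee'\in E(I(R))$, so $f(e,u_i)=e_i$ and $f(e',u_j)=e'_j$ with $e e'\in E(I(R))$, which by the first clause of the definition of $E(Shu^t_k(I(R)))$ — namely $\{u_iv_j:uv\in E(I(R)),\ i,j\in\{1,\dots,n\}\}$ — gives an edge. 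Conversely an edge of $Shu^t_k(I(R))$ of that form comes only from a pair whose idempotent coordinates are adjacent in $I(R)$, hence zero-dividing, hence adjacent in $Cl_2(R)$. In the second case, $u_iu_j=1$ with the chosen enumeration forces either $i=j\le t$ (and then the two vertices lie in the same copy $G'_i$ with $i\le t$, and are adjacent there by the second clause $\{u_iv_i:u_i\ne v_i,\ i\le t\}$, using $e\ne e'$), or $\{i,j\}=\{t+\ell,k+1-\ell\}$ for some $\ell$ with $t+\ell$ and $k+1-\ell$ in $\{t+1,\dots,n\}$, in which case the third clause $\{u_iv_{n+t+1-i}:i\in\{t+1,\dots,\frac{n+t}{2}\}\}$ supplies the edge (one checks $k+1-\ell = n+t+1-(t+\ell)$ since $n=k$); and conversely every edge of $Shu^t_k(I(R))$ coming from those two clauses arises this way. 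Collecting the cases shows $f$ is a graph isomorphism.

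The main obstacle, and the one point deserving care, is making the bookkeeping of the unit pairing match the index shifts $i\mapsto n+t+1-i$ in the definition of $Shu^t_n$ exactly — in particular checking that the pairing $u_{t+i}u_{k+1-i}=1$ on $U''(R)$ is consistent with the shuriken's pairing of copies $G'_i$ with $G'_{n+t+1-i}$ for $t+1\le i\le \frac{n+t}{2}$, which works out because $n=k$ makes $n+t+1-i = k+t+1-i$ and reindexing $i=t+\ell$ gives $k+1-\ell$. A secondary point to record explicitly is that the involution $u\mapsto u^{-1}$ on $U''(R)$ has no fixed points by definition of $U''(R)$, so $|U''(R)|=k-t$ is even and $Shu^t_k(I(R))$ is indeed defined; everything else is a routine case check essentially identical to the already-proved Theorem~\ref{isomorfisCl2Zpnqn} and Theorem~\ref{teo_isoZp1p2p3}, now carried out with $I(R)$ in place of the explicit graphs $G_1$ or the bipartite-plus-pendant structure used there.
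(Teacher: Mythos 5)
Your proposal is correct and follows essentially the same route as the paper's own proof: enumerate $U(R)$ so that $U'(R)$ comes first and the elements of $U''(R)$ are paired with their inverses under the index map $i\mapsto n+t+1-i$, send $(e,u_i)$ to the copy of $e$ in the $i$-th copy of $I(R)$ with $(1_R,u_i)\mapsto z_i$, and check the two adjacency cases in both directions. Your explicit remark that $u\mapsto u^{-1}$ is a fixed-point-free involution on $U''(R)$, so that $|U(R)|-|U'(R)|$ is even and the shuriken operation is actually defined, is a small point the paper leaves implicit.
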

\begin{proof}
    Let
    \begin{align*}
        Id(R) \setminus \{0\} &= \{e_1=1, e_2, e_3, \dots, e_k\} \text{ and }\\
        U(R)&=\{u_1,u_2,\dots,u_t,u_{t+1},u_{t+2},\dots,u_{n}\} \text{, where}\\
        U'(R) &=\{u_1,u_2,\dots,u_t\} \text{ and } u_{i}u_{n+t+1-i}=u_{n+t+1-i}u_{i}=1 \\
        &\text{ for each } i=t+1,t+2,\dots,\frac{n+t}{2}.
    \end{align*}
    We get 
    \begin{align*}
        V(Cl_2(R))=\{(e_i,u_j): i=1,2,\dots,k, j=1,2,\dots,n\}.
    \end{align*}
    On the other hand, 
    \begin{align*}
        V(I(R))&=\{e_2,e_3,\dots,e_k\}, \text{ so}\\
        V(Shu^{|U'(R)|}_{|U(R)|}(I(R)))&=V(Shu^{t}_{n}(I(R)))\\
        &=\bigcup_{i=1}^n \{e_{1i}, v_i: v \in V(I(R))\}\\
        &=\{e_{11},e_{12},\dots,e_{1n}, e_{21},e_{22},\dots,e_{2n},\\
        & \hspace{0.6cm} e_{31},e_{32},\dots,e_{3n}, \dots,e_{k1}, e_{k2}, \dots, e_{kn}\}
    \end{align*}
    and
    \begin{align*}
        E(Shu^t_n(I(R)))= &\{e_{ia}e_{jb} : e_ie_j \in E(I(R)), a,b \in \{1,2,\dots,n\}\} \\
        &\cup \{e_{li}e_{ji}: l,j \in \{1,2,\dots,k\}, l\neq j, i \in\{1,2,\dots,t\}\}\\ 
        &\cup \Bigg\{e_{li}e_{j(n+t+1-i)}: l,j \in \{1,2,\dots,k\}, \\
        & \hspace{3.4 cm}i \in\left\{t+1,t+2,\dots,\frac{n+t}{2}\right\}\Bigg\}.
    \end{align*}
    Consequently, we obtain
    $|V(Cl_2(R))|=nk=|V(Shu^{t}_{n}(I(R)))|$. Next, define the function $f: V(Cl_2(R)) \to V(Shu^{t}_{n}(I(R)))$, where
    \begin{align*}
        f(e_j,u_i)=e_{ji}
    \end{align*}
    for all $i=1,2,\dots,n$ and $j=1,2,\dots,k$.
    Let $(e_{j_1},u_{i_1}),(e_{j_2},u_{i_2}) \in V(Cl_2(R))$ such that $f((e_{j_1},u_{i_1}))=f((e_{j_2},u_{i_2}))$, we get
    \begin{align*}
        e_{j_1i_1}=e_{j_2i_2} \iff j_1=j_2 \text{ and } i_1=i_2 \iff (e_{j_1},u_{i_1})=(e_{j_2},u_{i_2}).
    \end{align*}
    Hence, $f$ is bijective function.

    Let $(e_{j_1},u_{i_1}),(e_{j_2},u_{i_2}) \in V(Cl_2(R))$ such that $$(e_{j_1},u_{i_1})(e_{j_2},u_{i_2}) \in E(Cl_2(R)).$$ We have two possibilites, $e_{j_1}e_{j_2}=e_{j_2}e_{j_1}=0$ or $u_{i_1}u_{i_2}=u_{i_2}u_{i_1}=1$.
    \begin{enumerate}
        \item If $e_{j_1}e_{j_2}=e_{j_2}e_{j_1}=0$, then $e_{j_1}e_{j_2} \in E(I(R))$. So, $$f((e_{j_1},u_{i_1}))f((e_{j_2},u_{i_2}))=e_{j_1i_1}e_{j_2i_2} \in E(Shu^t_n(I(R))).$$
        \item If $u_{i_1}u_{i_2}=u_{i_2}u_{i_1}=1$, then $i_2=n+t+1-i_1$ with $i_1 \in \{t+1,t+2,\dots,n\}$ or $i_1=i_2 \in \{1,2,\dots,t\}$. Consequently,
        $$f((e_{j_1},u_{i_1}))f((e_{j_2},u_{i_2}))=e_{j_1i_1}e_{j_2(n+t+1-i_1)} \in E(Shu^t_n(I(R)))$$
        or
        $$f((e_{j_1},u_{i_1}))f((e_{j_2},u_{i_2}))=e_{j_1i_1}e_{j_2i_1} \in E(Shu^t_n(I(R)))$$
    \end{enumerate}
    Let $(e_{j_1},u_{i_1}),(e_{j_2},u_{i_2}) \in V(Cl_2(R))$ such that $$f(e_{j_1},u_{i_1})f(e_{j_2},u_{i_2}) \in E(Shu^t_n(I(R))).$$ In other words, $e_{j_1i_1}e_{j_2i_2} \in E(Shu^t_n(I(R)))$. Based on the definition of the elements in $E(Shu^t_n(I(R)))$, the following three possibilities are obtained:
    \begin{enumerate}
        \item If $e_{j_1}e_{j_2} \in I(R)$, then $e_{j_1}e_{j_2}=e_{j_2}e_{j_1}=0$. Hence, $(e_{j_1},u_{i_1})(e_{j_2},u_{i_2}) \in E(Cl_2(R))$.
        \item If $i_1=i_2 \in \{1,2,\dots,t\}$ and $e_{j_1} \neq e_{j_2}$, we have $u_{i_1}u_{i_2}=u_{i_2}u_{i_1}=1$. So, $(e_{j_1},u_{i_1})(e_{j_2},u_{i_2}) \in E(Cl_2(R))$.
        \item If $i_2=n+t+1-i_1 \in \{t+1,t+2,\dots,n\}$, then 
        $$u_{i_1}u_{i_2}=u_{i_1}u_{n+t+1-i_1}=1=u_{n+t+1-i_1}u_{i_1}=u_{i_2}u_{i_1}.$$
        Hence, $(e_{j_1},u_{i_1})(e_{j_2},u_{i_2}) \in E(Cl_2(R))$.
    \end{enumerate}
\end{proof}

After establishing the general structure theorem for clean graphs by relating them to the idempotent graph of a ring, we can now apply this result to determine the clean graph structures over rings such as $\mathbb{Z}_{\tiny{p_1^{n_1}p_2^{n_2}p_3^{n_3}p_4^{n_4}}}$ and more broadly over $\mathbb{Z}_n$ for any positive integer $n$.

\begin{proposition}\label{propisoZp1p2p3p4}
    For any distinct prime numbers $p_1,p_2,p_3,p_4$ and natural numbers $n_1,n_2,n_3,n_4$, the following holds
    $$Cl_2(\mathbb{Z}_{p_1^{n_1}p_2^{n_2}p_3^{n_3}p_4^{n_4}}) \cong \begin{cases}
        Shu^8_{m}(G_2), & \text{ if } p_1^{n_1}=2,\\
        Shu^{16}_{2m}(G_2), & \text{ if } {p_1}^{n_1}=4,\\
        Shu^{32}_{\left(p_1^{n_1}-p_1^{n_1-1}\right) m}(G_2), & \text{ if } p_1=2, n_1 \geq 3,\\
        Shu^{16}_{\left(p_1^{n_1}-p_1^{n_1-1}\right) m}(G_2), & \text{ if } p_1,p_2,p_3,p_4 \neq 2,
    \end{cases}$$
    where $m=\left(p_2^{n_2}-p_2^{n_2-1}\right)\left(p_3^{n_3}-p_3^{n_3-1}\right)\left(p_4^{n_4}-p_4^{n_4-1}\right)$ and the structure of the graph $G_2$ is shown in the following figure:
    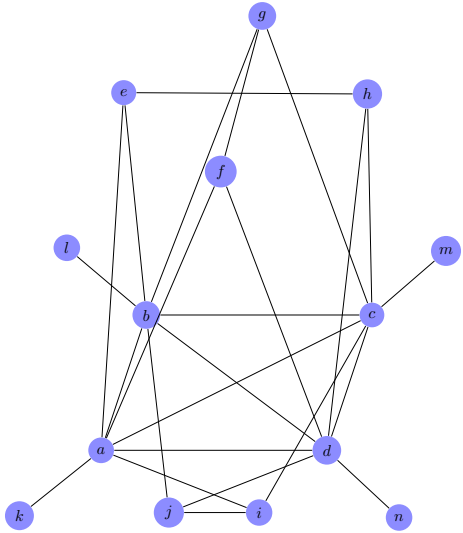
\begin{figure}[H]
    \begin{center} 
            \resizebox{0.5\textwidth}{!}{\begin{tikzpicture}
			[scale=1,auto=center,every node/.style={circle, fill=blue!45}] 
			\node (v1) at (-4,-1) {$a$};
			\node (v2) at (-3,2) {$b$};
			\node (v3) at (2,2) {$c$};
                \node (v4) at (1,-1) {$d$};
                \node (v5) at (-3.5,6.93) {$e$};
			\node (v6) at (-1.35,5.18) {$f$};
                \node (v7) at (-0.43,8.63) {$g$};
                \node (v8) at (1.9,6.9) {$h$};
			\node (v9) at (-4.76,3.49) {$l$};
                \node (v10) at (-5.81,-2.45) {$k$};
			\node (v11) at (3.64,3.42) {$m$};
                \node (v12) at (2.6, -2.49) {$n$};
                \node (v13) at (-0.5, -2.38) {$i$};
			\node (v14) at (-2.5, -2.38) {$j$};

			\draw (v1) -- (v2);
                \draw (v2) -- (v3);
                \draw (v3) -- (v4);
                \draw (v1) -- (v4);
                \draw (v1) -- (v3);
                \draw (v2) -- (v4);
                \draw (v1) -- (v5);
                \draw (v2) -- (v5);
                \draw (v1) -- (v6);
                \draw (v4) -- (v6);
                \draw (v2) -- (v7);
                \draw (v3) -- (v7);
                \draw (v3) -- (v8);
                \draw (v4) -- (v8);
                \draw (v1) -- (v10);
                \draw (v2) -- (v9);
                \draw (v3) -- (v11);
                \draw (v4) -- (v12);
                \draw (v1) -- (v13);
                \draw (v3) -- (v13);
                \draw (v2) -- (v14);
                \draw (v4) -- (v14);
                \draw (v5) -- (v8);
                \draw (v6) -- (v7);
                \draw (v13) -- (v14);
            \end{tikzpicture}}
            \end{center}\caption{Graph $G_2$}
\end{figure}
\end{proposition}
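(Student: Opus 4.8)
The plan is to obtain this proposition as a direct application of Theorem \ref{teoremCl2Shutn}, which already gives $Cl_2(R) \cong Shu^{|U'(R)|}_{|U(R)|}(I(R))$ for any ring $R$ with identity. Taking $R = \mathbb{Z}_{p_1^{n_1}p_2^{n_2}p_3^{n_3}p_4^{n_4}}$, only two things remain to be done: (i) to identify the idempotent graph $I(R)$ with the concrete graph $G_2$ shown in the figure, and (ii) to compute $|U(R)|$ and $|U'(R)|$ in each of the four cases.

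For step (i), I would first use the Chinese Remainder Theorem to write $R \cong \mathbb{Z}_{p_1^{n_1}} \times \mathbb{Z}_{p_2^{n_2}} \times \mathbb{Z}_{p_3^{n_3}} \times \mathbb{Z}_{p_4^{n_4}}$. Each factor is a local ring, so its only idempotents are $0$ and $1$; hence $Id(R)$ consists of the $2^4 = 16$ tuples in $\{0,1\}^4$, and discarding $(0,0,0,0)$ and $(1,1,1,1)$ leaves exactly $14$ vertices for $I(R)$, matching the $14$ vertices $a,\dots,n$ of $G_2$. Identifying each nontrivial idempotent with its support $S \subseteq \{1,2,3,4\}$, two vertices are adjacent in $I(R)$ precisely when the componentwise product vanishes, i.e. when the supports are disjoint; thus $I(R)$ is the disjointness graph on the nonempty proper subsets of $\{1,2,3,4\}$. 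I would then exhibit the explicit bijection onto $G_2$: the four singletons correspond to $a,b,c,d$, which are pairwise disjoint and hence span a $K_4$ (exactly the $K_4$ on $\{a,b,c,d\}$ drawn in $G_2$); the six two-element subsets correspond to $e,f,g,h,i,j$, where each such set is adjacent only to the two singletons it misses and to its complementary pair, giving the three ``pair--pair'' edges $eh$, $fg$, $ij$ together with the twelve singleton--pair edges; and the four three-element subsets correspond to the pendant vertices $k,l,m,n$, each adjacent to the unique singleton disjoint from it. A short degree check --- four vertices of degree $7$, six of degree $3$, four of degree $1$, for a total of $25$ edges --- together with this edge-by-edge matching confirms $I(R) \cong G_2$, and hence $Cl_2(R) \cong Shu^{|U'(R)|}_{|U(R)|}(G_2)$.

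For step (ii), I would invoke Lemma \ref{lemma_U'U''(R)} factor by factor: $|U(\mathbb{Z}_{p^k})| = p^k - p^{k-1}$ always, while $|U'(\mathbb{Z}_{2})| = 1$, $|U'(\mathbb{Z}_{4})| = 2$, $|U'(\mathbb{Z}_{2^k})| = 4$ for $k \geq 3$, and $|U'(\mathbb{Z}_{p^k})| = 2$ for odd $p$. Since both units and square roots of $1$ are computed componentwise in a direct product, $|U(R)|$ and $|U'(R)|$ are the corresponding products over the four prime powers. Splitting on the value of $p_1^{n_1}$ exactly as in Proposition \ref{propisoZp1p2p3} then yields $|U'(R)| \in \{8, 16, 32, 16\}$ and the stated value of the lower index $|U(R)|$ in each case, which, substituted into $Cl_2(R) \cong Shu^{|U'(R)|}_{|U(R)|}(G_2)$, gives the four displayed isomorphisms.

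The hard part will be step (i): there is no conceptual obstacle, but one must take care that the pictured $G_2$ is drawn correctly and that the labels $a,\dots,n$ are matched with the right subsets so that all $25$ edges are accounted for. The degree sequence narrows the choices enough that the matching is essentially forced, so I expect this verification to be routine, if somewhat lengthy.
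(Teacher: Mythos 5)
Your proposal is correct and follows essentially the same route as the paper: apply Theorem \ref{teoremCl2Shutn} to $R=\mathbb{Z}_{p_1^{n_1}p_2^{n_2}p_3^{n_3}p_4^{n_4}}$, identify $I(R)$ with $G_2$ (the paper simply lists the explicit idempotent tuples for $a,\dots,n$, whereas you verify the disjointness-graph structure and degree sequence in more detail), and compute $|U(R)|$ and $|U'(R)|$ componentwise via Lemma \ref{lemma_U'U''(R)} in the same four cases.
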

\begin{proof}
    Based on Theorem \ref{teoremCl2Shutn}, we have
    $$Cl_2(\mathbb{Z}_{p_1^{n_1}p_2^{n_2}p_3^{n_3}p_4^{n_4}}) \cong Shu^{|U'(\mathbb{Z}_{p_1^{n_1}p_2^{n_2}p_3^{n_3}p_4^{n_4}})|}_{|U(\mathbb{Z}_{p_1^{n_1}p_2^{n_2}p_3^{n_3}p_4^{n_4}})|}(I(\mathbb{Z}_{p_1^{n_1}p_2^{n_2}p_3^{n_3}p_4^{n_4}})).$$
    In this case, $G_2=I(\mathbb{Z}_{p_1^{n_1}p_2^{n_2}p_3^{n_3}p_4^{n_4}})$, with $a=(1,0,0,0)$, $b=(0,1,0,0)$, $c=(0,0,1,0)$, $d=(0,0,0,1)$, $e=(0,0,1,1)$, $f=(0,1,1,0)$, $g=(1,0,0,1)$, $h=(1,1,0,0)$, $i=(0,1,0,1)$, $j=(1,0,1,0)$, $k=(0,1,1,1)$, $l=(1,0,1,1)$, $m=(1,1,0,1)$, and $n=(1,1,1,0)$.
    From Lemma \ref{lemma_U'U''(R)}, we have four possibilities below.
    \begin{enumerate}
        \item Case $p_1^{n_1}=2$. So, $p_2,p_3,p_4 \neq 2$. We get $$|U(\mathbb{Z}_{p_1^{n_1}p_2^{n_2}p_3^{n_3}p_4^{n_4}})|=\left(p_2^{n_2}-p_2^{n_2-1}\right)\left(p_3^{n_3}-p_3^{n_3-1}\right)\left(p_4^{n_4}-p_4^{n_4-1}\right)$$ and
        \begin{align*}
            \left(|U'(\mathbb{Z}_{p_1^{n_1}})|=1 \text{, }  |U'(\mathbb{Z}_{p_2^{n_2}})|=2 \text{, }  |U'(\mathbb{Z}_{p_3^{n_3}})|=2 \text{, }  |U'(\mathbb{Z}_{p_4^{n_4}})|=2 \right) \\
            \Rightarrow  |U'(\mathbb{Z}_{p_1^{n_1}p_2^{n_2}p_3^{n_3}p_4^{n_4}})|=8.
        \end{align*}
        
        \item Case $p_1^{n_1}=4$. Hence, $p_2,p_3,p_4 \neq 2$. So, $$|U(\mathbb{Z}_{p_1^{n_1}p_2^{n_2}p_3^{n_3}p_4^{n_4}})|=2\left(p_2^{n_2}-p_2^{n_2-1}\right)\left(p_3^{n_3}-p_3^{n_3-1}\right)\left(p_4^{n_4}-p_4^{n_4-1}\right)$$ and
        \begin{align*}
             \left(|U'(\mathbb{Z}_{p_1^{n_1}})|=2 \text{, }  |U'(\mathbb{Z}_{p_2^{n_2}})|=2 \text{, }  |U'(\mathbb{Z}_{p_3^{n_3}})|=2 \text{, }  |U'(\mathbb{Z}_{p_4^{n_4}})|=2 \right) \\
            \Rightarrow  |U'(\mathbb{Z}_{p_1^{n_1}p_2^{n_2}p_3^{n_3}p_4^{n_4}})|=16.
        \end{align*}
        
        \item Case $p_1=2$ and $n_1 \geq 3$. It must be $p_2,p_3,p_4 \neq 2$. So,
        \begin{align*}|U(\mathbb{Z}_{p_1^{n_1}p_2^{n_2}p_3^{n_3}p_4^{n_4}})|=\left(p_1^{n_1}-p_1^{n_1-1}\right)&\left(p_2^{n_2}-p_2^{n_2-1}\right)\\
        &\left(p_3^{n_3}-p_3^{n_3-1}\right)\left(p_4^{n_4}-p_4^{n_4-1}\right)\end{align*} and
        \begin{align*}
           \left(|U'(\mathbb{Z}_{p_1^{n_1}})|=4 \text{, }  |U'(\mathbb{Z}_{p_2^{n_2}})|=2 \text{, }  |U'(\mathbb{Z}_{p_3^{n_3}})|=2 \text{, }  |U'(\mathbb{Z}_{p_4^{n_4}})|=2 \right) \\
            \Rightarrow  |U'(\mathbb{Z}_{p_1^{n_1}p_2^{n_2}p_3^{n_3}p_4^{n_4}})|=32.
        \end{align*}
       
        \item Case $p_1,p_2,p_3,p_4 \neq 2$. We have 
        $$|U(\mathbb{Z}_{p_1^{n_1}p_2^{n_2}p_3^{n_3}p_4^{n_4}})|=\left(p_1^{n_1}-p_1^{n_1-1}\right)\left(p_2^{n_2}-p_2^{n_2-1}\right)\left(p_3^{n_3}-p_3^{n_3-1}\right)\left(p_4^{n_4}-p_4^{n_4-1}\right)$$ and
        \begin{align*}
            \left(|U'(\mathbb{Z}_{p_1^{n_1}})|=2 \text{, }  |U'(\mathbb{Z}_{p_2^{n_2}})|=2 \text{, }  |U'(\mathbb{Z}_{p_3^{n_3}})|=2 \text{, }  |U'(\mathbb{Z}_{p_4^{n_4}})|=2 \right) \\
            \Rightarrow  |U'(\mathbb{Z}_{p_1^{n_1}p_2^{n_2}p_3^{n_3}p_4^{n_4}})|=16.
        \end{align*}
    \end{enumerate}
\end{proof}

\begin{corollary}
    For any natural number $n$, there exist distinct prime numbers $p_1, p_2, \dots, p_k$ and positive integers $n_1, n_2, \dots, n_k$ such that $n = p_1^{n_1}p_2^{n_2}\dots p_k^{n_k}$. The following holds
    \begin{align*}
        Cl_2(\mathbb{Z}_n)\cong \begin{cases}
            Shu^{2^{k-1}}_{m}(I(\mathbb{Z}_n)), &\text{if } 2 \mid n \text{ and } 4 \nmid n\\
            Shu^{2^{k+1}}_m(I(\mathbb{Z}_n)), &\text{if } 8 \mid n\\
            Shu^{2^k}_m(I(\mathbb{Z}_n)), &\text{otherwise}
        \end{cases}
    \end{align*}
    where $m=(p_1^{n_1}-p_1^{n_1-1})(p_2^{n_2}-p_2^{n_2-1})\dots(p_k^{n_k}-p_k^{n_k-1})$.
\end{corollary}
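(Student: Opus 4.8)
The plan is to obtain the result as a direct specialization of Theorem~\ref{teoremCl2Shutn}. Applying that theorem with $R=\mathbb{Z}_n$ gives at once
$$Cl_2(\mathbb{Z}_n)\cong Shu^{|U'(\mathbb{Z}_n)|}_{|U(\mathbb{Z}_n)|}\bigl(I(\mathbb{Z}_n)\bigr),$$
so the entire task reduces to identifying the two numerical parameters $|U(\mathbb{Z}_n)|$ and $|U'(\mathbb{Z}_n)|$ in terms of the prime factorization $n=p_1^{n_1}\cdots p_k^{n_k}$. The subscript is immediate: $|U(\mathbb{Z}_n)|=\varphi(n)=\prod_{i=1}^{k}\bigl(p_i^{n_i}-p_i^{n_i-1}\bigr)=m$, which matches the subscript displayed in the statement.

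\textbf{Computing $|U'(\mathbb{Z}_n)|$.} The key point is multiplicativity. The Chinese Remainder Theorem furnishes a ring isomorphism $\mathbb{Z}_n\cong\prod_{i=1}^{k}\mathbb{Z}_{p_i^{n_i}}$, and any ring isomorphism sends solutions of $x^2=1$ to solutions of $x^2=1$ coordinatewise; hence it restricts to a bijection $U'(\mathbb{Z}_n)\to\prod_{i=1}^{k}U'(\mathbb{Z}_{p_i^{n_i}})$, giving $|U'(\mathbb{Z}_n)|=\prod_{i=1}^{k}|U'(\mathbb{Z}_{p_i^{n_i}})|$. Now I would evaluate each factor using Lemma~\ref{lemma_U'U''(R)}: $|U'(\mathbb{Z}_{p_i^{n_i}})|=2$ when $p_i$ is odd, and $|U'(\mathbb{Z}_{p_i^{n_i}})|=4$ when $p_i=2$ and $n_i\geq 3$. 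The two prime powers not covered by that lemma, $\mathbb{Z}_2$ and $\mathbb{Z}_4$, are checked by hand: $U'(\mathbb{Z}_2)=\{1\}$, so $|U'(\mathbb{Z}_2)|=1$, and $U'(\mathbb{Z}_4)=\{1,3\}$, so $|U'(\mathbb{Z}_4)|=2$.

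\textbf{Assembling the cases.} Multiplying the local contributions: if $2\nmid n$ all $k$ factors equal $2$, giving $2^k$; if $2\mid n$ and $4\nmid n$ exactly one factor (from $\mathbb{Z}_2$) equals $1$ and the remaining $k-1$ equal $2$, giving $2^{k-1}$; if $4\mid n$ but $8\nmid n$ one factor (from $\mathbb{Z}_4$) equals $2$ and the rest equal $2$, again giving $2^k$; and if $8\mid n$ one factor equals $4$ and the rest equal $2$, giving $2^{k+1}$. Grouping the first and third subcases under ``otherwise'' reproduces exactly the three-way formula in the statement. A small remark to include: the shuriken operation is legitimately applied here because $|U(\mathbb{Z}_n)|-|U'(\mathbb{Z}_n)|=|U''(\mathbb{Z}_n)|$ is even, since the non-involutory units pair off via $u\leftrightarrow u^{-1}$, so the parity hypothesis in the definition of $Shu^t_n$ is satisfied. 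There is no genuine obstacle in this proof — it is bookkeeping on top of Theorem~\ref{teoremCl2Shutn} — the only points demanding attention are remembering that Lemma~\ref{lemma_U'U''(R)} omits $\mathbb{Z}_2$ and $\mathbb{Z}_4$, and verifying that the three displayed cases genuinely partition all possibilities.
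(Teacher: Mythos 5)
Your proposal is correct and follows exactly the route the paper intends: the corollary is stated as an immediate application of Theorem~\ref{teoremCl2Shutn}, with $|U(\mathbb{Z}_n)|=\varphi(n)$ and $|U'(\mathbb{Z}_n)|$ computed multiplicatively via the CRT and Lemma~\ref{lemma_U'U''(R)} (plus the hand-checked cases $\mathbb{Z}_2$ and $\mathbb{Z}_4$), just as in the proofs of Propositions~\ref{propisoZpnqm}, \ref{propisoZp1p2p3}, and \ref{propisoZp1p2p3p4}. Your case analysis reproduces the three-way formula correctly, and the remark on the parity of $|U''(\mathbb{Z}_n)|$ is a worthwhile addition the paper leaves implicit.
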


% \textcolor{red}{Selanjutnya, jika ditinjau untuk sebarang ring artinian berhingga $R$, artinya $R$ dapat dinyatakan sebagai hasil kartesian dari sebanyak berhingga (misalkan sebanyak $n$) ring Artinian lokal. Akibatnya, idempoten dari $R$ dapat dipandang sebagai $n$-tuple dengan entri-entri $0,1$. Untuk sebarang $a=(a_1,a_2,\dots,a_n), b=(b_1, b_2, \dots, b_n) \in V(I(R))$, artinya terdapat $a_i,b_j=0$ dengan $1\leq i,j\leq n$, sehingga diperoleh jalan $a-e_i-e_j-b$, dengan untuk sebarang $1\leq k \leq n$, $e_k$ adalah elemen idempoten $n$-tuple dengan entri ke-$k$ sama dengan 1 dan entri-entri lainnya sama dengan 0 (Jika $i=j$, maka terdapat jalan $a-e_i-b$).  Jadi, graf idempoten $I(R)$ merupakan graf terhubung. Berdasarkan Proposisi \ref{propShtnkeIR}, dapat disimpulkan bahwa untuk sebarang ring-ring Artinian $R_1, R_2$, jika graf $Cl_2(R_1) \cong Cl_2(R_2)$, maka berlaku $I(R_1) \cong I(R_2)$ dan $(U(R_1),\cdot_{R_1}) \cong (U(R_2),\cdot_{R_2})$.}

\section*{Conclusion}
This study explored the structure of clean graphs over the ring $\mathbb{Z}_n$, focusing on the subgraph $Cl_2(\mathbb{Z}_n)$ formed by pairs of nonzero idempotents and units. We examined its relationship with the idempotent graph $I(\mathbb{Z}_n)$ and found a strong structural correspondence between the clean graph over ring and their idempotent graph. The results highlight how the algebraic properties of idempotents and units in a ring shape the topology of their associated graphs. These insights enhance the theoretical framework for algebraic graphs and support further investigations into their structural and combinatorial properties.

\section*{Acknowledgement}
Sincere appreciation is extended to Professor Cihat Abdioğlu for introducing the research topic and providing foundational insights that guided the development of this work. Gratitude is also expressed to Professor Ahmad Erfanian for offering valuable comments and suggestions that further refined the study. This research was carried out during the graduate study of the first author at Universitas Gadjah Mada (UGM), Yogyakarta, Indonesia, with support from the Department of Mathematics, UGM, and funding provided by the Ministry of Research, Technology, and Higher Education of the Republic of Indonesia (Kementerian Riset, Teknologi, dan Pendidikan Tinggi) through the PMDSU (Program Magister Menuju Doktor untuk Sarjana Unggul) Scholarship, 2024–2028.


\begin{thebibliography}{99}

\bibitem{anderson}
Anderson, D. F. 2011. Zero-divisor graphs in commutative rings. \emph{Commutative Algebra, Noetherian and Non-Noetherian Perspectives/Springer-Verlag}.

\bibitem{akbari}
Akbari, S., Habibi, M., Majidinya, A., \& Manaviyat, R. 2013. On the idempotent graph of a ring. \emph{Journal of Algebra and its Applications}, 12(06), 1350003.

\bibitem{beck}
Beck, I. 1988. Coloring of commutative rings. \emph{Journal of algebra}, 116(1), 208-226.

\bibitem{fish}
Fish, W., Key, J. D., \& Mwambene, E. 2010. Codes from incidence matrices and line graphs of Hamming graphs. \emph{Discrete mathematics}, 310(13-14), 1884-1897.

\bibitem{grimaldi}
Grimaldi, R. P. 2006. \emph{Discrete and Combinatorial Mathematics, 5/e. Pearson Education India}.

\bibitem{habibiyet}
Habibi, M., Yetkın Çelıkel, E., \& Abdıoğlu, C. 2021. Clean graph of a ring. \emph{Journal of Algebra and Its Applications}, 20(09), 2150156.

\bibitem{immormino}
Immormino, N. A. 2013. \emph{Clean rings \& clean group rings}. Bowling Green State University.

\bibitem{jain}
Jain, R. S., Reddy, B. S., \& Shaikh, W. M. 2023. Construction of linear codes from the unit graph $G(\mathbb{Z}_n)$. \emph{Asian-European Journal of Mathematics}, 16(11).

\bibitem{malikmor}
Malik, D. S., Mordeson, J. N., \& Sen, M. K. 2007. MTH 581-582: Introduction to Abstract Algebra. United States of America.

\bibitem{nicholzhou}
Nicholson, W. K., \& Zhou, Y. 2005. Clean rings: a survey. \emph{In Advances in ring theory} (pp. 181-198).

\bibitem{patil}
Patil, A., Momale, P. S., \& Jadhav, C. M. 2024. On the Idempotent Graph of Matrix Ring. \emph{arXiv preprint arXiv:2402.11256}.

\bibitem{pongthana}
Pongthana, T., Jantarakhajorn, K., \& Khuhirun, B. 2022. \emph{Isomorphism classes of clean graphs of rings of integers modulo n} (Doctoral dissertation, Thammasat University).

\bibitem{singhpatekar}
Singh, R., \& Patekar, S. C. 2023. On the Clean Graph of a Ring. \emph{arXiv preprint arXiv:2301.09433}.

\bibitem{wilson}
Wilson, R. J. 2010. \emph{Introduction to Graph Theory, Fifth Edition}. Pearson Education Limited.

\end{thebibliography}
\end{document}